\documentclass[a4paper,12]{article}%
\usepackage[english]{babel}
\usepackage{amsmath}
\usepackage{graphicx}
\usepackage{epsfig,cite}
\usepackage{amsfonts}%
\usepackage{amssymb}
\usepackage{amsthm}

\bibliographystyle{plain}

\textwidth=13cm
\textheight=20.0cm

\usepackage{hhline}

\usepackage{colortbl}
\usepackage{xcolor}

\newtheorem{theorem}{Theorem}

\newtheorem{remark}{Remark}

\usepackage{algorithm2e}

\numberwithin{equation}{section}
\numberwithin{theorem}{section}
\numberwithin{corollary}{section}
\numberwithin{remark}{section}

\begin{document}
\markboth{V.L. MAKAROV, N.O.
ROSSOKHATA, D.V. DRAGUNOV}{Exponentially convergent functional-discrete method}
\title{\bf Exponentially convergent functional-discrete method for solving Sturm-Liouville problems with potential including Dirac $\delta$-function.}
\author{{\bf V.L. Makarov\footnote{E-mail: {\tt makarov@imath.kiev.ua}}, D.V. Dragunov\footnote{E-mail: {\tt dragunovdenis@gmail.com}}}\\[2mm]
Department of Numerical Mathematics, \\
Institute of  Mathematics of NAS of Ukraine, \\
3 Tereshchenkivs'ka Str., Kyiv-4, 01601, Ukraine\\[3mm]
{\bf N.O.Rossokhata\footnote{E-mail: {\tt nataross@gmail.com}}}\\[2mm]
Department of Mathematics and Statistics,\\
Concordia University,\\
1455 De Maisonneuve Blvd. West, QC H3G 1M8, Canada}
\maketitle
\begin{abstract}
In the paper we present a functional-discrete method for solving Sturm-Liouville problems with potential including function from $L_{1}(0,1)$ and $\delta$-function. For both, linear and nonlinear cases the sufficient conditions providing superexponential convergence rate of the method are obtained. The question of possible software implementation of the method is discussed in detail. The theoretical results are successfully confirmed by the numerical example included in the paper.
\end{abstract}

{\it 2010 Mathematical subject classification:}{ 65L15, 65Y15, 34D10, 34L16, 34L20.}

{\bf Key words:} Sturm-Liouville problem, Dirac $\delta$-function, integrable potential, Adomian polynomials, superexponentially convergent algorithm, functional-discrete method.
\section{Introduction}

The functional-discrete method (FD-method) was first  proposed for the Sturm-Liouville problem in \cite {makarov1}. The idea of this approach consists of embedding an original problem into a parametric set of problems with respect to a parameter $\tau$ in such a way that for $\tau=0$ we have a linear eigenvalue problem with piecewise constant coefficients and for $\tau=1$ we have the given eigenvalue problem. The transition from $\tau=0$ to $\tau=1$ using Taylor series results in a recursive algorithm. Thus, we represent the exact solution to the given problem, a pair (eigenvalue, eigenfunction), as two series, the first one --- for the eigenvalue, the other one --- for the eigenfunction. Then, an approximate solution is a pair of the corresponding truncated series. We find the first terms for both series via the {\it coefficient approximation method} (CAM) using piece-wise constant approximations. The CAM was first substantiated by Kryloff and Bogoliaubov (``tronson's'' method) in \cite{kryloff} and then developed in \cite{gordon, pruess, pryce, Ixaru}. As a result of piece-wise constant  approximation, we obtain an ``unperturbed'' problem which is also referenced to as the {\it basic problem}. The eigensolution to this problem gives us the first terms for the series representations mentioned above. Then, each successive term in the series for eigenfunction can be found as the exact solution to a linear boundary-value (not eigenvalue!) problem with a scalar parameter. A single value of this parameter which provides the solvability of the corresponding BVP gives us the next term in the series representation for eigenvalue. This approach allows us to find a numerical solution to the given eigenvalue problem with any desired accuracy, that is, we can improve the accuracy just by carrying out  a few more iterations.

This technique was developed for the Sturm-Liouville problems with continuous potential in
\cite{mrrew, bglm1,makarov2, mu1},
for the transmission eigenvalue problems with continuous potential in \cite{mrb1, mrb2, ross3.3}, for the linear eigenvalue problems with potential belonging to space $L_1$ in \cite{mr1, ross1}, for the nonlinear eigenvalue problems with continuous potential in \cite{ross4.1, ross4.2, makarov2008}. For all these cases the sufficient conditions providing the exponential convergence rate of the method have been found. It was also shown that the convergence improves along with the increase of the index of a trial eigenvalue.

In this article we extend the FD-approach to the case of a discontinuous potential which consists of the Delta-function and a function from space $L_1$. Such problems are of great interest in \cite{SavShkal_4,SavShkal_7, Albeverio_1, Albeverio_2}.

The paper is organized as follows. In section \ref{s_2} we state the rigorous formulation of the problem we deal with and briefly discuss several known properties of Sturm-Liouville problems with distribution potentials. In section \ref{s_3} the general description of the FD-method's algorithm is presented. The main theoretical results are stated and proved in section \ref{s_4}. Section \ref{s_5} is devoted to the question of the software implementation for the proposed method. Numerical example is given in section \ref{s_6}, followed by section \ref{s_7} containing  the conclusions.

\section{Formulation of the problem}\label{s_2}
In the paper we consider the following Sturm-Liouville problem\footnote{In this problem condition  $u^{\prime}(0)=1$ can be substituted by $\int\limits_{0}^{1}\bigl(u(x)\bigr)^{2}d x=const$ and the FD-method's algorithm presented below can be modified for treating problems of such type.}:
\begin{eqnarray}\label{eq_1}
    \frac{d^{2}}{d x^{2}}u(x)-\left[\beta\delta(x-\alpha)+q(x)\right]u(x)+\lambda u(x)-N(u(x))=0,\quad x\in (0,1)\backslash\{\alpha\}, \\
    u(0)=u(1)=0;\quad u^{\prime}(0)=1,\nonumber
\end{eqnarray}
where $\alpha\in (0,1),$ $\beta\geq 0,$  $q(x)\in L_{1}\left[0, 1\right],$ $N(u)=\sum\limits_{p=1}^{\infty}a_{p}u^{p}, \; \forall u\in\mathbb{R}.$ Here $\delta(x)$ denotes Dirac $\delta$-function or {\it impulse symbol,} which can be viewed as the derivative of the Heaviside step function (see \cite{Bracewell})
\begin{equation}\label{delta_function_properties}
    \frac{d}{d x}H(x)=\delta(x),\quad H(x)=
    \begin{cases}
    0, & x<0,\\
    \frac{1}{2}, & x=0,\\
    1, & x>0.
    \end{cases}
\end{equation}

Using \eqref{delta_function_properties} it is easy to verify that the solution $u(x)$ to problem \eqref{eq_1} satisfies the following equalities
\begin{eqnarray}\label{eq_2}
  u^{\prime}(x) &=& 1+\int\limits_{0}^{x}\left[q(x)u(x)-\lambda u(x)+N(u(x))\right]d x, \quad x \in \left[0, \alpha\right), \\
  u^{\prime}(x) &=& 1+\beta u(\alpha)+\int\limits_{0}^{x}\left[q(x)u(x)-\lambda u(x)+N(u(x))\right]d x,  \quad x \in \left(\alpha, 1\right]. \nonumber
\end{eqnarray}

Equalities \eqref{eq_2} imply that function $u^{\prime}(x)$ is discontinuous at the point $x=\alpha:$
\begin{equation}\label{eq_3}
    u^{\prime}(\alpha+0)-u^{\prime}(\alpha-0)=\beta u(\alpha);
\end{equation}

 Using formulas \eqref{eq_2} and \eqref{eq_3} we can conclude that problem \eqref{eq_1} is equivalent to the following one
\begin{eqnarray}\label{eq_4}
    \frac{d^{2}}{d x^{2}}u(x)-q(x)u(x)+\lambda u(x)-N(u(x))=0,\quad x\in(0,1)\backslash \{\alpha\}.\\
    u(0)=u(1)=0;\quad u^{\prime}(0)=1,\quad u^{\prime}(\alpha+0)-u^{\prime}(\alpha-0)=\beta u(\alpha).\nonumber
\end{eqnarray}
 Atkinson (see \cite{atkinson}, Ch.11) proved that linear Sturm-Liouville problem with a potential as a function of bounded variation preserves the following properties of the Sturm-Liouville problem with a continuous potential:
\begin{enumerate}
\item[({\bf a})]\label{assertion_a} all eigenvalues are real, simple, and form a monotone sequence increasing to infinity;
\item[({\bf b})]\label{assertion_b} the sequence of the corresponding normalized eigenfunctions forms a complete orthogonal system in $L_2(0,1)$.
\end{enumerate}
Also Atkinson have obtained asymptotic formulas for eigenvalues and corresponding eigenfunctions.

In \cite{vinokurov1, vinokurov2, vinokurov3, vinokurov4, vinokurov5, vinokurov6} the Sturm-Liouville problem with a potential as a function of bounded variation is studied using the generalized formulation of the problem. Particularly, in \cite{vinokurov3, vinokurov4, vinokurov5} it is shown that an eigenvalue $\lambda_n$ as a function of a potential $q$ is analytical, monotone, and linear with respect to constants.

\section{FD-method for solving Sturm-Liouville problem with potential including $\delta$-function} \label{s_3}
\subsection{General description of the method}

Let us consider the following generalization of problem   \eqref{eq_4}
\begin{eqnarray}
    & \frac{\partial^{2}}{\partial x^{2}}u(x, \tau)-\left[\tau q(x)+\lambda(\tau)\right] u(x, \tau)-\tau N(u(x,\tau))=0,\; x\in (0,1),\; x\neq\alpha \label{eq_5}  \\
    & u(0, \tau)=u(1, \tau)=0;\quad u^{\prime}(0,\tau)=1,\quad u(\alpha+0, \tau)-u(\alpha-0, \tau)=0,\label{eq_6}\\
    & \quad u^{\prime}_{x}(\alpha+0, \tau)- u^{\prime}_{x}(\alpha-0, \tau)=\beta u(\alpha, \tau),\quad \forall \tau\in \left[0, 1\right].\label{eq_7}
\end{eqnarray}

The fact that for $\tau=0$ we can easily find an exact solution $(\lambda(0), u_i(x,0), i=1,2)$ to problem \eqref{eq_5} -- \eqref{eq_7} and that for $\tau=1$ the solution $(\lambda(1), u_i(x,1), i=1,2)$ to problem \eqref{eq_5} -- \eqref{eq_7} coincides with the exact solution to  problem \eqref{eq_4}, suggests an idea to write the solution to problem \eqref{eq_5} -- \eqref{eq_7} in the form of series with respect to $\tau$
\begin{equation}\label{eq_8}
    u(x, \tau)=\sum\limits_{i=0}^{\infty}\stackrel{(i)}{u}\!\!\!(x)\tau^{i} \quad, \lambda(\tau)=\sum\limits_{i=0}^{\infty}\stackrel{(i)}{\lambda}\tau^{i}, \quad \forall x, \tau\in [0,1].
\end{equation}
To apply the FD-method's technique to the problem we also have to assume that
\begin{equation}\label{eq_9}
    \frac{\partial}{\partial x}u(x, \tau)=\sum\limits_{i=0}^{\infty}\frac{d}{d x}\stackrel{(i)}{u}\!\!\!(x)\tau^{i}, \quad \frac{\partial^{2}}{\partial x^{2}}u(x, \tau)=\sum\limits_{i=0}^{\infty}\frac{d^{2}}{d x^{2}}\stackrel{(i)}{u}\!\!\!(x)\tau^{i}
\end{equation}
for all $\tau \in [0,1]$ and for almost all $x\in(0,1).$

Setting $\tau=1$, we obtain the following representation
\begin{equation}\label{6}
\lambda=\lambda(1)=\sum_{j=0}^\infty\stackrel{(j)}{\lambda}, \hspace*{5mm} u(x)=u(x,1)=\sum_{j=0}^\infty \stackrel{(j)}{u}\!\!\!(x), \hspace*{5mm} i=1,2
\end{equation}
provided that these series converge.
Thus, we can represent the approximate solution to problem \eqref{eq_4} as the pair of corresponding truncated series
\begin{equation}\label{7}
\stackrel{m}{\lambda}=\sum_{j=0}^m\stackrel{(j)}{\lambda}, \hspace*{5mm} \stackrel{m}{u}\!\!\!(x)=\sum_{j=0}^m \stackrel{(j)}{u}\!\!\!(x), \hspace*{5mm} i=1,2,
\end{equation}
which is called the approximation of rank $m$.

Supposing that the function $u(x)\in C[0,1]$ \eqref{6} satisfies boundary conditions \eqref{eq_6} we arrive at the conclusion that the functions $\stackrel{(i)}{u}\!\!\!(x)$ are continuous on $[0, 1]$ and satisfy the conditions
\begin{eqnarray}\label{eq_10}
    \stackrel{(i)}{u}\!\!\!(0)=\stackrel{(i)}{u}\!\!\!(1)=0, \quad \forall i\in \mathbb{N}\cup \left\{0\right\},\nonumber \\ \left.\frac{d \stackrel{(0)}{u}\!\!\!(x)}{d x}\right|_{x=0}=1,\quad  \left.\frac{d \stackrel{(i)}{u}\!\!\!(x)}{d x}\right|_{x=0}=0,\quad \forall i\in \mathbb{N}.
\end{eqnarray}
To meet requirement \eqref{eq_7} we have to demand that
\begin{equation}\label{eq_11}
    \left.\frac{d\stackrel{(i)}{u}\!\!\!(x)}{d x}\right|_{x=\alpha+0}-\left.\frac{d\stackrel{(i)}{u}\!\!\!(x)}{d x}\right|_{x=\alpha-0}=\beta \stackrel{(i)}{u}\!\!\!(\alpha),\quad \forall i\in \mathbb{N}\cup \left\{0\right\}.
\end{equation}

Combining assumptions \eqref{eq_8}, \eqref{eq_9} together with equation \eqref{eq_5} we make the conclusion that unknown pairs $\bigl(\stackrel{(i)}{\lambda}, \stackrel{(i)}{u}(x)\bigr),$ $i\in \mathbb{N}\cup \{0\}$ can be found as the solutions to the following recurrence system of second-order differential equations
\begin{equation}\label{Recurrence_sequence_of_equations}
    \frac{d^{2}}{d x^{2}}\stackrel{(i)}{u}\!\!\!(x)+\stackrel{(0)}{\lambda}\stackrel{(i)}{u}\!\!\!(x)=\stackrel{(i)}{F}\!\!\!(x),\; x\in(0,1),\; x\neq \alpha,\quad i\in \mathbb{N}
\end{equation}
$$\stackrel{(i)}{F}\!\!\!(x)=-\sum\limits_{p=0}^{i-1}\stackrel{(i-p)}{\lambda}\stackrel{(p)}{u}\!\!\!(x)+q(x)\stackrel{(i-1)}{u}\!\!\!(x)+A_{i-1}\Bigl(N; \stackrel{(0)}{u}\!\!\!(x),\stackrel{(1)}{u}\!\!\!(x),\ldots, \stackrel{(i-1)}{u}\!\!\!(x)\Bigr)$$
\begin{equation}\label{Basic_equation}
    \frac{d^{2}}{d x^{2}}\stackrel{(0)}{u}\!\!\!(x)+\stackrel{(0)}{\lambda}\stackrel{(0)}{u}\!\!\!(x)=0,\; x\in(0,1),\; x\neq \alpha
\end{equation}
supplemented with conditions \eqref{eq_10}, \eqref{eq_11}, where
$A_{k}\left(N; v_{0}, v_{1}, \ldots, v_{k}\right)$ denotes the well-known Adomian polynomial (see \cite{seng1}, \cite{seng2}) of the order $k$ for nonlinear function $N(u)$
$$A_{k}\left(N; v_{0}, v_{1}, \ldots, v_{k}\right)=\left.\frac{d^{k}}{d t^{k}}N\left(\sum\limits_{p=0}^{\infty}v_{p}t^{p}\right)\right|_{t=0}.$$

\subsection{Basic problem}
Let us consider the problem of finding $\stackrel{(0)}{\lambda}$ and $\stackrel{(0)}{u}\!\!\!(x)$ in more detail. We call this problem {\it the basic problem}:
\begin{eqnarray}
  \frac{d^{2}}{d x^{2}}\stackrel{(0)}{u}\!\!\!(x)+\stackrel{(0)}{\lambda} \stackrel{(0)}{u}\!\!\!(x)=0,\; x\in(0,1),\; x\neq \alpha, && \label{eq_12}\\
  \stackrel{(0)}{u}\!\!\!(0)= \stackrel{(0)}{u}\!\!\!(1)=0, \quad \left.\frac{d}{d x}\stackrel{(0)}{u}\!\!\!(x)\right|_{x=0}=1,&&\label{eq_13}\\   \left.\frac{d\stackrel{(0)}{u}\!\!\!(x)}{d x}\right|_{x=\alpha+0}-\left.\frac{d\stackrel{(0)}{u}\!\!\!(x)}{d x}\right|_{x=\alpha-0}=\beta \stackrel{(0)}{u}\!\!\!(\alpha).\label{eq_14}
\end{eqnarray}

The unknown function $\stackrel{(0)}{u}\!\!\!(x)$ satisfying problem \eqref{eq_12}, \eqref{eq_13} can be represented in the following form
\begin{equation}\label{eq_15}
    \stackrel{(0)}{u}\!\!\!(x)=\stackrel{(0)}{u}\!\!\!(x,\stackrel{(0)}{\lambda})=\left\{
                 \begin{array}{cc}
                   \sin\left(\sqrt{\lambda^{(0)}}x\right)/\sqrt{\lambda^{(0)}} & x\in \left[0, \alpha\right], \\
                   \stackrel{(0)}{c}\sin\left(\sqrt{\lambda^{(0)}}(1-x)\right) & x\in \left(\alpha, 1\right], \\
                 \end{array}
               \right.
\end{equation}
where $\stackrel{(0)}{c}\in \mathbf{R}.$ Using condition \eqref{eq_14} and the fact that $\stackrel{(0)}{u}\!\!\!(x)$ is continuous on $[0,1]$ we obtain the following system of transcendental equations for determination of $\stackrel{(0)}{\lambda}$ and $\stackrel{(0)}{c}:$
\begin{eqnarray}
  \sin\Bigl(\sqrt{\stackrel{(0)}{\lambda}}\alpha\Bigr)/\sqrt{\stackrel{(0)}{\lambda}} = \stackrel{(0)}{c}\sin\Bigl(\sqrt{\stackrel{(0)}{\lambda}}(1-\alpha)\Bigr),\label{eq_16} \\
  -\stackrel{(0)}{c}\sqrt{\stackrel{(0)}{\lambda}}\cos\Bigl(\sqrt{\stackrel{(0)}{\lambda}}(1-\alpha)\Bigr)-\cos\Bigl(\sqrt{\stackrel{(0)}{\lambda}}\alpha\Bigr) = \beta \sin\Bigl(\sqrt{\stackrel{(0)}{\lambda}}\alpha\Bigr)/\sqrt{\stackrel{(0)}{\lambda}}.\label{eq_17}
\end{eqnarray}
Eliminating unknown parameter $\stackrel{(0)}{c}$ from system \eqref{eq_16}, \eqref{eq_17} we arrive at the equation with respect to $\stackrel{(0)}{\lambda}$
\begin{equation}\label{equation_for_lambda^{(0)}}
    \sqrt{\stackrel{(0)}{\lambda}}\sin\Bigl(\sqrt{\stackrel{(0)}{\lambda}}\Bigr)=-\beta\sin\Bigl(\sqrt{\stackrel{(0)}{\lambda}}\alpha\Bigr)\sin\Bigl(\sqrt{\stackrel{(0)}{\lambda}}(1-\alpha)\Bigr)
\end{equation}
and at the following expression for $\stackrel{(0)}{c}$
\begin{equation}\label{expression_for_c^{(0)}}
   \stackrel{(0)}{c}=\stackrel{(0)}{c}(\stackrel{(0)}{\lambda})=\left\{
                          \begin{array}{cc}
                            \frac{\sin\Bigl(\sqrt{\stackrel{(0)}{\lambda}}\alpha\Bigr)}{\sqrt{\stackrel{(0)}{\lambda}}\sin\Bigl(\sqrt{\stackrel{(0)}{\lambda}}(1-\alpha)\Bigr)},& \sin\Bigl(\sqrt{\stackrel{(0)}{\lambda}}(1-\alpha)\Bigr)\neq 0, \\
                            -\frac{\cos\Bigl(\sqrt{\stackrel{(0)}{\lambda}}\Bigr)}{\sqrt{\stackrel{(0)}{\lambda}}} ,& \sin\Bigl(\sqrt{\stackrel{(0)}{\lambda}}(1-\alpha)\Bigr)= 0.\\
                          \end{array}
                        \right.
\end{equation}

\begin{theorem}\label{basic_teorem}
Suppose that $\alpha\in(0,1)$ and $k\in \mathbf{N},$ then
 the interval $[\pi^{2} k^{2}, \pi^{2} (k+1)^{2})$ contains precisely one root of  equation \eqref{equation_for_lambda^{(0)}}, provided that $\beta\geq 0$.
\end{theorem}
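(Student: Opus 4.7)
The plan is to prove existence via the Intermediate Value Theorem and uniqueness via a variational lower bound combined with a counting argument.

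Set $F(\lambda)=\sqrt{\lambda}\sin\sqrt{\lambda}+\beta\sin(\sqrt{\lambda}\alpha)\sin(\sqrt{\lambda}(1-\alpha))$. Using $\sin(\pi k)=0$ together with the identity $\sin(\pi k(1-\alpha))=(-1)^{k+1}\sin(\pi k\alpha)$, a direct computation gives $F(\pi^2k^2)=(-1)^{k+1}\beta\sin^2(\pi k\alpha)$ and $F(\pi^2(k+1)^2)=(-1)^k\beta\sin^2(\pi(k+1)\alpha)$; since $\beta\ge 0$, one of these values is $\ge 0$ and the other is $\le 0$. A short case analysis --- treating separately the generic case, the case $\sin(\pi k\alpha)=0$ (in which $\lambda=\pi^2k^2$ is itself a root), and the case $\sin(\pi(k+1)\alpha)=0$ (in which a first-order Taylor expansion shows $F'(\pi^2(k+1)^2)\ne 0$, so $F$ has a definite sign just to the left of the right endpoint) --- yields at least one root of \eqref{equation_for_lambda^{(0)}} in $[\pi^2k^2,\pi^2(k+1)^2)$ for every $k\ge 1$.

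For uniqueness, I would pass to the spectral viewpoint: equation \eqref{equation_for_lambda^{(0)}} describes exactly the eigenvalues of the self-adjoint Sturm--Liouville operator on $(0,1)$ with Dirichlet boundary conditions and a non-negative $\delta$-coupling of strength $\beta$ at $x=\alpha$, whose associated quadratic form is $q(u)=\int_0^1(u')^2\,dx+\beta u(\alpha)^2$ on $H^1_0(0,1)$. By Atkinson's property (a) cited earlier in the paper, these eigenvalues form a simple strictly increasing sequence $\lambda_1(\beta)<\lambda_2(\beta)<\cdots\to\infty$, and the Courant--Fischer min-max principle applied to $q$ (whose second term is non-negative) gives $\lambda_k(\beta)\ge\lambda_k(0)=\pi^2k^2$ for every $k$. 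Consequently at most $k$ roots of \eqref{equation_for_lambda^{(0)}} can lie in $[0,\pi^2(k+1)^2)$; combined with the at-least-one-per-subinterval lower bound from the existence step, exactly one root lies in each $[\pi^2j^2,\pi^2(j+1)^2)$ for $j=1,\dots,k$.

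The main obstacle will be the degenerate configuration in which the right endpoint $\pi^2(k+1)^2$ is itself a zero of $F$ (equivalently, $(k+1)\alpha\in\mathbb{N}$), because the Intermediate Value Theorem does not apply naively there and one must analyse $F$ to first order near that endpoint. Once this bookkeeping is done, the variational inequality and the counting step are standard.
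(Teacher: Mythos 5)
Your argument is correct in outline, but it follows a genuinely different route from the paper. The paper handles existence and uniqueness in one stroke by an elementary monotonicity computation: it substitutes $y=\sqrt{\smash[b]{\lambda}}$, rewrites \eqref{equation_for_lambda^{(0)}} as $-y/\beta=f(y)$ with $f(y)=\sin(y\alpha)\sin(y(1-\alpha))/\sin(y)$, observes that $f'(y)=\bigl[\alpha\sin^{2}(y(1-\alpha))+(1-\alpha)\sin^{2}(y\alpha)\bigr]/\sin^{2}(y)>0$, and then counts intersections of the strictly increasing branch of $f$ on $(\pi k,\pi(k+1))$ with the strictly decreasing negative line $-y/\beta$, splitting into the cases $\sin(\pi k\alpha)=0$ and $\sin(\pi k\alpha)\neq 0$ according to the one-sided limits of $f$ at the endpoints. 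Your existence step (the sign alternation $F(\pi^{2}k^{2})=(-1)^{k+1}\beta\sin^{2}(\pi k\alpha)$, $F(\pi^{2}(k+1)^{2})=(-1)^{k}\beta\sin^{2}(\pi(k+1)\alpha)$, plus the first-order analysis at a degenerate right endpoint) is clean and correct, and your observation that the two degenerate cases cannot occur simultaneously since $\alpha\in(0,1)$ is exactly the bookkeeping needed. Your uniqueness step, however, imports spectral machinery the paper deliberately avoids: it requires (i) the identification of the positive roots of \eqref{equation_for_lambda^{(0)}} with the full spectrum of the $\delta$-interaction operator, which is true but needs the explicit check that a root with $\sin(\sqrt{\smash[b]{\lambda}}(1-\alpha))=0$ still yields a nontrivial eigenfunction (there the continuity condition \eqref{eq_16} degenerates to $0=0$ and $c$ is fixed by the jump condition instead, cf.\ the second branch of \eqref{expression_for_c^{(0)}}); (ii) closedness of the form $\int_{0}^{1}(u')^{2}dx+\beta u(\alpha)^{2}$ on $H^{1}_{0}(0,1)$ and the Courant--Fischer bound $\lambda_{j}(\beta)\geq\pi^{2}j^{2}$; and (iii) the absence of roots below $\pi^{2}$, which follows from the same bound. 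What your approach buys is that the inequality $\lambda_{n}\geq\pi^{2}n^{2}$ (which the paper only extracts afterwards, in Remark \ref{asymptotic_remark}) appears as an independent variational input, and the counting argument would survive more general non-negative perturbations; what the paper's approach buys is a short, self-contained, completely elementary proof in which uniqueness is not a separate global counting step but a local consequence of $f'>0$.
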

\begin{proof}
Suppose that $\beta\geq 0$ and fix some positive integer $k.$

First of all we consider the case of $\beta=0.$ It is easy to check that in this case equation \eqref{equation_for_lambda^{(0)}} possesses  the countable set of solutions \begin{equation}\label{Solutions_beta=0}
   \stackrel{(0)}{\lambda}=\pi^{2}k^{2},\;\;\;\forall k\in \mathbf{N}.
\end{equation}
There are no other solutions of equation \eqref{equation_for_lambda^{(0)}} except those presented in \eqref{Solutions_beta=0}\footnote{We do not take into account solutions with $\stackrel{(0)}{\lambda}\leq 0.$}. We have that assertion of the theorem holds evidently.

From now on we assume that $\beta> 0.$ For the given fixed $k\in\mathbf{N}$ there can be only two possibilities:
1) $\sin(\pi k\alpha)=0;$
2) $\sin(\pi k\alpha)\neq 0.$

Let us consider case 1). We have that in this case the value $\stackrel{(0)}{\lambda}=\pi^{2} k^{2}$ satisfies equation \eqref{equation_for_lambda^{(0)}}. Hence, to prove the theorem  it remains only to show that the interval $(\pi^{2} k^{2}, \pi^{2} (k+1)^{2})$ contains no other roots of equation \eqref{equation_for_lambda^{(0)}}.    For this purpose we need to modify equation \eqref{equation_for_lambda^{(0)}} like the following
 \begin{equation}\label{modified equation}
    -\frac{y}{\beta}=f(y),\quad y\in (\pi k, \pi (k+1)),
 \end{equation}
 where
 $$ f(y)=\frac{\sin\Bigl(y\alpha\Bigr)\sin\Bigl(y(1-\alpha)\Bigr)}{\sin\Bigl(y\Bigr)},\quad y=\sqrt{\stackrel{(0)}{\lambda}}.$$

 It is not hard to verify that
\begin{equation}\label{df}
    f^{\prime}(y)=\frac{\alpha\sin^{2}(y(1-\alpha))+(1-\alpha)\sin^{2}(y\alpha)}{\sin^{2}(y)}
\end{equation}
and $\lim\limits_{y\rightarrow \pi k+0}f(y)=0.$
Equality $\sin(\pi k\alpha)=0$ implies that
$\sin(\pi (k+1)\alpha)\neq0$ and we have that
$\lim\limits_{y\rightarrow \pi (k+1)-0}f(y)=+\infty.$
Taking into account the positiveness of $f^{\prime}(y)$ on $(\pi k, \pi (k+1))$ (see \eqref{df}) we arrive at the conclusion that $(\pi k, \pi (k+1))\stackrel{f}{\rightarrow}(0, +\infty).$ This means that the graph of function $z=-\frac{y}{\beta}$ can't intersect the graph of function $z=f(y)$ on the interval $(\pi k, \pi (k+1))$ (see Fig. \ref{pic_1_basicpr}) and equation \eqref{equation_for_lambda^{(0)}} has no roots on the interval $(\pi^{2}k^{2}, \pi^{2}(k+1)^{2}).$ For case 1) the theorem is proved.

\begin{figure}[h!]
\begin{minipage}[h]{1\linewidth}
\begin{minipage}[h]{0.48\linewidth}
\center{\rotatebox{-0}{\includegraphics[
width=1.0\linewidth]{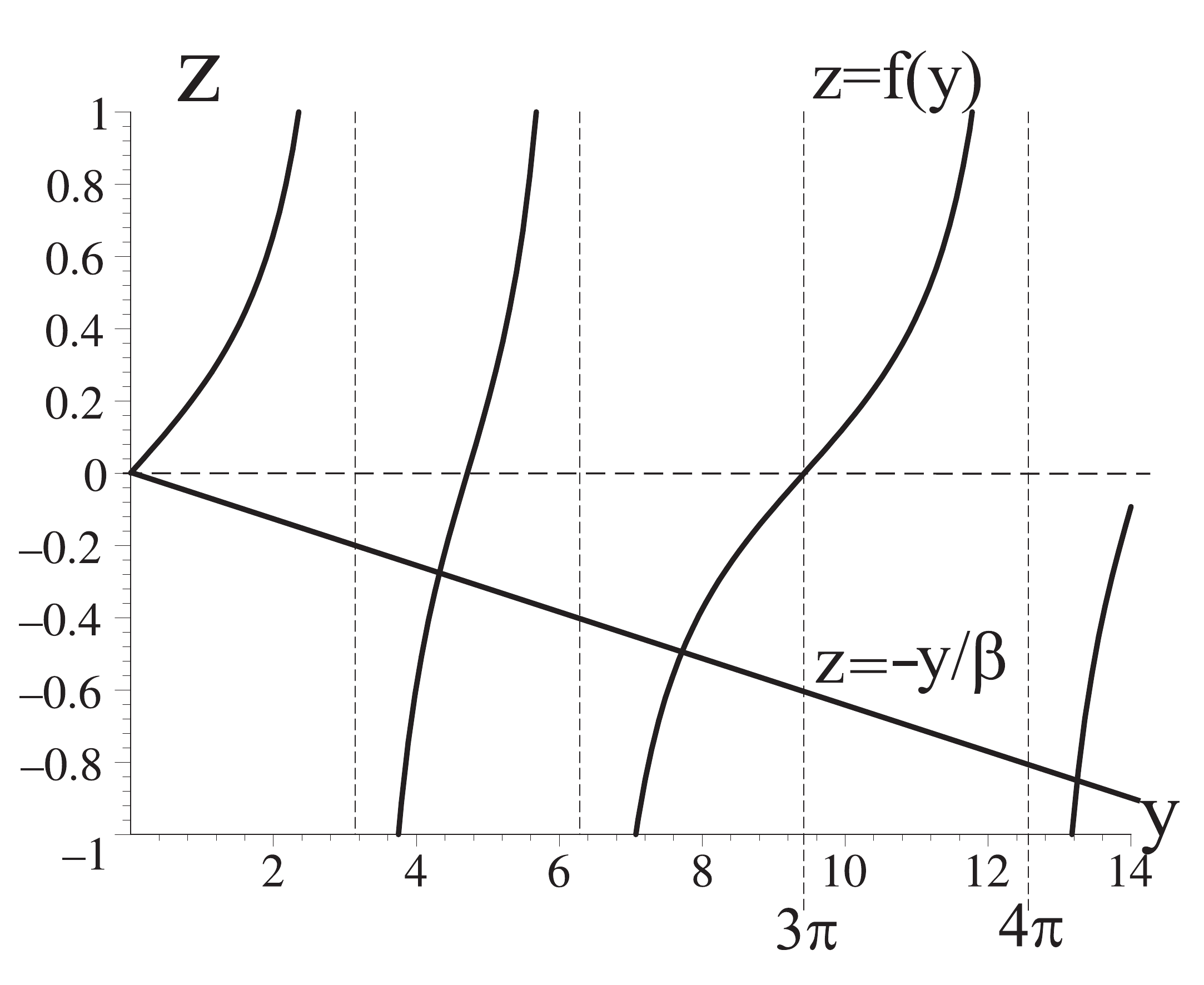}} \\ a)}
\caption{The graphs of the functions $z=f(x)$ \eqref{modified equation} and $z=-y/\beta,$ with $\alpha=1/3,$ $\beta=15.$ Equation \eqref{modified equation} possesses no roots on the interval $(\pi k, \pi (k+1))$ with $k=3.$
}\label{pic_1_basicpr}
\end{minipage}
\hfill
\begin{minipage}[h]{0.48\linewidth}
\center{\rotatebox{-0}{\includegraphics[
width=1.0\linewidth]{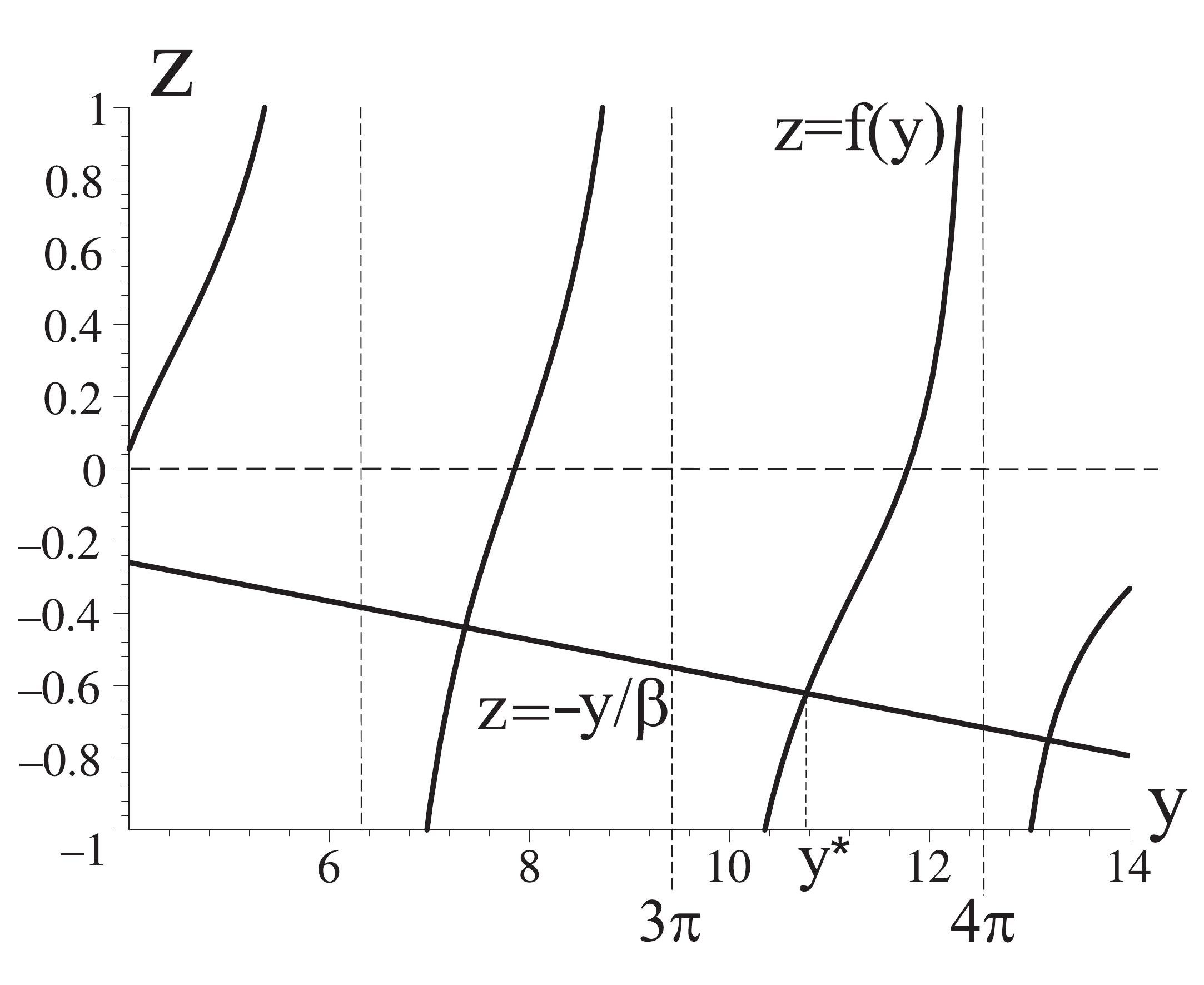}} \\ b)}
\caption{The graphs of the functions $z=f(x)$ \eqref{modified equation} and $z=-y/\beta,$ with $\alpha=1/5,$ $\beta=15.$ Here $y^{\ast}$ denotes the solution to equation \eqref{modified equation} on the interval $(\pi k, \pi (k+1))$ with $k=3.$
}\label{pic_2_basicpr}
\end{minipage}
\end{minipage}
\end{figure}

 It is worth to emphasize that case 1) is possible if and only if $\alpha=\frac{l}{k},$ where $l\in \mathbf{N},$ $l<k.$

And now we switch our attention to case 2) $\sin(\pi k\alpha)\neq 0.$  It is easy to see that in this case the value $\stackrel{(0)}{\lambda}=\pi^{2} k^{2}$ do not satisfies equation \eqref{equation_for_lambda^{(0)}}. We intend to show that equation \eqref{equation_for_lambda^{(0)}} possesses precisely one root on the interval $(\pi^{2}k^{2}, \pi^{2}(k+1)^{2}).$ For this purpose we will use equation \eqref{modified equation} again.
As it was established earlier (see formula \eqref{df}) the function $z=f(y)$ is strictly increasing on the interval $(\pi k, \pi (k+1)).$ Furthermore, it is easy to check that $$\lim\limits_{y\rightarrow \pi k+0}f(y)=-\infty,\quad \lim\limits_{y\rightarrow \pi (k+1)-0}f(y)=
\left\{
\begin{array}{cc}
+\infty, & \mbox{if}\quad \sin(\pi (k+1)\alpha)\neq 0,\\
0,& \mbox{if}\quad \sin(\pi (k+1)\alpha)= 0  \\
\end{array}
\right.
$$
and we have $(\pi k, \pi (k+1))\stackrel{f}{\rightarrow}(-\infty, 0).$ The last fact provides that the graph of function $z=f(y)$ necessarily intersects with the graph of function $z=-\frac{y}{\beta}$ on the interval $(\pi k, \pi (k+1))$  in a single point $y=y^{\ast},$ see Fig. \ref{pic_2_basicpr}. This point is a unique solution to equation \eqref{modified equation} on $(\pi k, \pi (k+1))$ and $\stackrel{(0)}{\lambda}=\left(y^{\ast}\right)^{2}$ satisfies equation  \eqref{equation_for_lambda^{(0)}}. Hence, assertion of the theorem holds true, which was to be proved.

The proof is completed.
\end{proof}

Proving Theorem \ref{basic_teorem} we have established an interesting property of the basic problem's solutions in regard to parameter $\alpha.$ We can formulate this property as the following remark.
\begin{remark}
   If $\alpha=\dfrac{m}{n},$ $m,n\in \mathbf{N},$ $m<n,$ then $\stackrel{(0)}{\lambda}= \pi k n$ satisfies equation \eqref{equation_for_lambda^{(0)}} for all $k\in \mathbf{N}.$
\end{remark}

Theorem \ref{basic_teorem} allows us to enumerate the roots of equation \eqref{equation_for_lambda^{(0)}}. We will denote by $\stackrel{(0)}{\lambda_{n}}$ the root of equation \eqref{equation_for_lambda^{(0)}} lied in the interval
$[\pi^{2} k^{2}, \pi^{2} (k+1)^{2}).$ We also will use the notation (see \eqref{eq_15}, \eqref{expression_for_c^{(0)}})
$$\stackrel{(0)}{u_{n}}(x)=\stackrel{(0)}{u}\!\!\!(x, \stackrel{(0)}{\lambda_{n}}), \quad \stackrel{(0)}{c_{n}}=\stackrel{(0)}{c}\!\!\!(\stackrel{(0)}{\lambda_{u}}).$$

Before passing to the next section we should make the following remark.
\begin{remark}\label{rem_3}

  The value $\stackrel{(0)}{c_{n}},$ $n\in\mathbf{N}$ is always nonzero and its value squared can be represented  in the following form
  \begin{equation}\label{c_square}
    \Bigl(\stackrel{(0)}{c_{n}}\Bigr)^{2}=\frac{1}{\stackrel{(0)}{\lambda_{n}}}\left[\sin^{2}\Bigl(\sqrt{\stackrel{(0)}{\lambda_{n}}}\alpha\Bigr)+\left(\cos\Bigl(\sqrt{\stackrel{(0)}{\lambda_{n}}}\alpha\Bigr)+\frac{\beta}{\sqrt{\stackrel{(0)}{\lambda_{n}}}}\sin\Bigl(\sqrt{\stackrel{(0)}{\lambda_{n}}}\alpha\Bigr)\right)^{2}\right]=\frac{\widetilde{c}_{n}}{\stackrel{(0)}{\lambda_{n}}}.
  \end{equation}

\end{remark}

\section{Convergence of the FD-method}\label{s_4}
In this section we will investigate the question of sufficient conditions providing the convergence of series \eqref{6}. To obtain such conditions we will use the method of generating functions and the main part of this section is devoted to the derivation of the appropriate estimates for the terms of series \eqref{6}.

\subsection{Equation for the generating function}

Suppose that $\stackrel{(0)}{\lambda_{n}}>0$ is an arbitrary eigenvalue of the Sturm-Liouville problem \eqref{eq_12}--\eqref{eq_14}.
The pairs $\stackrel{(j)}{\lambda_{n}},$ $\stackrel{(j)}{u_{n}}\!\!(x)$ for $j=1\ldots m$ can be found as the solutions to the second order differential equations (see \eqref{Recurrence_sequence_of_equations})
\begin{equation}\label{n_Recurrence_sequence_of_equations}
    \frac{d^{2}}{d x^{2}}\stackrel{(j)}{u_{n}}\!\!(x)+\stackrel{(0)}{\lambda_{n}}\stackrel{(j)}{u_{n}}\!\!(x)=\stackrel{(j)}{F_{n}}\!\!(x),\; x\in[0,1],\; x\neq\alpha,
\end{equation}
$$\stackrel{(j)}{F_{n}}\!\!(x)=-\sum\limits_{p=0}^{j-1}\stackrel{(j-p)}{\lambda_{n}}\stackrel{(p)}{u_{n}}\!\!(x)+q(x)\stackrel{(j-1)}{u_{n}}\!\!(x)+A_{j-1}\Bigl(N; \stackrel{(0)}{u_{n}}\!\!(x),\stackrel{(1)}{u_{n}}\!\!(x),\ldots, \stackrel{(j-1)}{u_{n}}\!\!(x)\Bigr)$$
with boundary conditions  ( see \eqref{eq_10})
\begin{eqnarray}\label{n_eq_10}
    \stackrel{(j)}{u_{n}}\!\!(0)=\stackrel{(j)}{u_{n}}\!\!(1)=0,\quad  \left.\frac{d \stackrel{(j)}{u_{n}}\!\!(x)}{d x}\right|_{x=0}=0,
\end{eqnarray}
and matching  conditions ( see \eqref{eq_11})
\begin{equation}\label{n_eq_11}
    \left.\frac{d\stackrel{(i)}{u_{n}}\!\!(x)}{d x}\right|_{x=\alpha+0}-\left.\frac{d\stackrel{(i)}{u_{n}}\!\!(x)}{d x}\right|_{x=\alpha-0}=\beta \stackrel{(i)}{u_{n}}(\alpha).
\end{equation}

The general solution to the $j$-th equation of system \eqref{n_Recurrence_sequence_of_equations} can be represented in the following form
\begin{equation}\label{u_j_left_representation}
    \stackrel{(j)}{u_{n}}\!\!(x)=\int\limits_{0}^{x}\frac{\sin\Bigl(\sqrt{\stackrel{(0)}{\lambda_{n}}}(x-\xi)\Bigr)}{\sqrt{\stackrel{(0)}{\lambda_{n}}}}\stackrel{(j)}{F_{n}}\!\!(\xi)d\xi, \quad x\in \left[0,\alpha\right],
\end{equation}
\begin{equation}\label{u_j_right_representation}
    \stackrel{(j)}{u_{n}}\!\!(x)=\stackrel{(j)}{c_{n}}\sin\Bigl(\sqrt{\stackrel{(0)}{\lambda_n}}(1-x)\Bigr)- \int\limits_{x}^{1}\frac{\sin\Bigl(\sqrt{\stackrel{(0)}{\lambda_n}}(x-\xi)\Bigr)}{\sqrt{\stackrel{(0)}{\lambda_n}}}\stackrel{(j)}{F}_{n}\!\!(\xi)d\xi, \quad x\in \left[\alpha,1\right].
\end{equation}
For equation \eqref{n_Recurrence_sequence_of_equations} to be solvable, the orthogonality condition
\begin{equation}\label{ortho_condition}
  \int\limits_{0}^{1}\stackrel{(j)}{F_{n}}(\xi)\stackrel{(0)}{u_{n}}(\xi)d\xi=0
\end{equation}
have to be provided. Condition \eqref{ortho_condition} allows us to find $\stackrel{(j)}{\lambda}_{n}:$
\begin{equation}\label{lambda_j}
    \stackrel{(j)}{\lambda_{n}}=\frac{\stackrel{(0)}{\lambda_{n}}}{M_{n}}\left(-\sum\limits_{p=1}^{j-1}\stackrel{(j-p)}{\lambda_{n}}\int\limits_{0}^{1}\stackrel{(0)}{u_{n}}(\xi)\stackrel{(p)}{u_{n}}(\xi)d\xi+\right.
\end{equation}
$$\left. +\int\limits_{0}^{1}q(\xi)\stackrel{(j-1)}{u_{n}}(\xi)\stackrel{(0)}{u_{n}}(\xi)d\xi+\int\limits_{0}^{1}A_{j-1}\Bigl(N; \stackrel{(0)}{u_{n}}(x),\stackrel{(1)}{u_{n}}(x),\ldots, \stackrel{(j-1)}{u_{n}}(x)\Bigr)\stackrel{(0)}{u_{n}}(\xi)d\xi\right),$$
where
$$   \frac{M_{n}}{\stackrel{(0)}{\lambda_{n}}}=\int\limits_{0}^{1}\Bigl(\stackrel{(0)}{u_{n}}(\xi)\Bigr)^{2}d\xi=\int\limits_{0}^{\alpha}\frac{\sin^{2}\Bigl(\sqrt{\stackrel{(0)}{\lambda_{n}}}x\Bigr)}{\stackrel{(0)}{\lambda_{n}}}d x+\Bigl(\stackrel{(0)}{c_{n}}\Bigr)^{2}\int\limits_{0}^{\alpha}\sin^{2}\Bigl(\sqrt{\stackrel{(0)}{\lambda_{n}}}(1-x)\Bigr)d x=$$
$$=\frac{1}{\stackrel{(0)}{\lambda_{n}}}\biggl(\Biggl.\Bigl(\frac{x}{2}-\frac{\sin\Bigl(2\sqrt{\stackrel{(0)}{\lambda_{n}}}x\Bigr)}{4\sqrt{\stackrel{(0)}{\lambda_{n}}}}\Bigr)\Biggr|_{0}^{\alpha}\biggr)+\Bigl(\stackrel{(0)}{c_{n}}\Bigr)^{2}\Biggl.\Biggl(\frac{x}{2}+\frac{\sin\Bigl(2\sqrt{\stackrel{(0)}{\lambda_{n}}}(1-x)\Bigr)}{4\sqrt{\stackrel{(0)}{\lambda_{n}}}}\Biggr)\Biggr|_{\alpha}^{1}=$$
$$=\frac{1}{2\stackrel{(0)}{\lambda_{n}}}\Biggl(\alpha-\frac{\sin\Bigl(2\sqrt{\stackrel{(0)}{\lambda_{n}}}\alpha\Bigr)}{2\sqrt{\stackrel{(0)}{\lambda_{n}}}}\Biggr)+\frac{\Bigl(\stackrel{(0)}{c_{n}}\Bigr)^{2}}{2}\Biggl((1-\alpha)-\frac{\sin\Bigl(2\sqrt{\stackrel{(0)}{\lambda_{n}}}(1-\alpha)\Bigr)}{2\sqrt{\stackrel{(0)}{\lambda_{n}}}}\Biggr).$$
It is easy to verify that system  \eqref{eq_16}, \eqref{eq_17} implies the equality
$$\Bigl(\stackrel{(0)}{c_{n}}\Bigr)^{2}\frac{\sin\Bigl(2\sqrt{\stackrel{(0)}{\lambda_{n}}}(1-\alpha)\Bigr)}{2\sqrt{\stackrel{(0)}{\lambda_{n}}}}=-\frac{\sin\Bigl(2\sqrt{\stackrel{(0)}{\lambda_{n}}}(\alpha)\Bigr)}{2\stackrel{(0)}{\lambda_{n}}\sqrt{\stackrel{(0)}{\lambda_{n}}}}-\frac{\beta\sin^{2}\Bigl(\sqrt{\stackrel{(0)}{\lambda_{n}}}\alpha\Bigr)}{\Bigl(\stackrel{(0)}{\lambda_{n}}\Bigr)^{2}},$$
using which we can obtain the following representation for $M_{n}:$
\begin{equation}\label{F_1}
M_{n}=\frac{\stackrel{(0)}{\lambda_{n}}}{2}\Biggl(\alpha/\stackrel{(0)}{\lambda_{n}}+\Bigl(\stackrel{(0)}{c_{n}}\Bigr)^{2}(1-\alpha)+\beta\sin^{2}\Bigl(\sqrt{\stackrel{(0)}{\lambda_{n}}}\alpha\Bigr)/\Bigl(\stackrel{(0)}{\lambda_{n}}\Bigr)^{2}\Biggr).
\end{equation}
On the other hand, formula \eqref{c_square} can be simplified like the following
\begin{equation}\label{F_2}
\Bigl(\stackrel{(0)}{c_{n}}\Bigr)^{2}=\frac{1}{\stackrel{(0)}{\lambda_{n}}}\left[1+\frac{2\beta}{\sqrt{\stackrel{(0)}{\lambda_{n}}}}\cos\Bigl(\sqrt{\stackrel{(0)}{\lambda_{n}}}\alpha\Bigr)\sin\Bigl(\sqrt{\stackrel{(0)}{\lambda_{n}}}\alpha\Bigr)+\frac{\beta^{2}}{\stackrel{(0)}{\lambda_{n}}}\sin^{2}\Bigl(\sqrt{\stackrel{(0)}{\lambda_{n}}}\alpha\Bigr)\right].
\end{equation}

Combining formulas \eqref{F_1} and \eqref{F_2} we arrive at the following expression for $M_{n}$
\begin{equation}\label{M_n}
    M_{n}=\frac{1}{2}\Biggl(1+\Biggl[\beta\sin\Bigl(2\sqrt{\stackrel{(0)}{\lambda_{n}}}\alpha\Bigr)/\sqrt{\stackrel{(0)}{\lambda_{n}}}+\beta^{2}\sin^{2}\Bigl(\sqrt{\stackrel{(0)}{\lambda_{n}}}\alpha\Bigr)/\stackrel{(0)}{\lambda_{n}}\Biggr](1-\alpha)+\Biggr.
\end{equation}
$$\Biggl.+\beta\sin^{2}\Bigl(\sqrt{\stackrel{(0)}{\lambda_{n}}}\alpha\Bigr)/\stackrel{(0)}{\lambda_{n}}\Biggr).$$

For the function $\stackrel{(j)}{u_{n}}\!\!\!(x)$ \eqref{u_j_left_representation}, \eqref{u_j_right_representation} to be continuous in the interval $[0,1]$ and satisfy condition \eqref{n_eq_11}, the  parameter $\stackrel{(j)}{c_{n}}$ in formula \eqref{u_j_right_representation} should satisfy the following equations
\begin{equation}\label{c_j_formula_1}
    \stackrel{(j)}{c_{n}}\sin\Bigl(\sqrt{\stackrel{(0)}{\lambda_{n}}}(1-\alpha)\Bigr)=\int\limits_{0}^{1}\frac{\sin\Bigl(\sqrt{\stackrel{(0)}{\lambda_{n}}}(\alpha-\xi)\Bigr)}{\sqrt{\stackrel{(0)}{\lambda_{n}}}}\stackrel{(j)}{F_{n}}\!\!\!(\xi)d\xi
\end{equation}
\begin{equation}\label{c_j_formula_2}
    \stackrel{(j)}{c_{n}}\cos\Bigl(\sqrt{\stackrel{(0)}{\lambda_{n}}}(1-\alpha)\Bigr)=-\int\limits_{0}^{1}\frac{\cos\Bigl(\sqrt{\stackrel{(0)}{\lambda_{n}}}(\alpha-\xi)\Bigr)}{\sqrt{\stackrel{(0)}{\lambda_{n}}}}\stackrel{(j)}{F_{n}}\!\!\!(\xi)d\xi-\beta
    \int\limits_{0}^{\alpha}\frac{\sin\Bigl(\sqrt{\stackrel{(0)}{\lambda_{n}}}(\alpha-\xi)\Bigr)}{\stackrel{(0)}{\lambda_{n}}}\stackrel{(j)}{F_{n}}\!\!\!(\xi)d\xi.
\end{equation}

Formulas \eqref{c_j_formula_1} and \eqref{c_j_formula_2} will be used in the software algorithm of the FD-method, as it is described in section \ref{s_5}. However, to obtain the convenient estimate for $\bigl|\stackrel{(j)}{c_{n}}\bigr|$ we have to square both sides of equalities \eqref{c_j_formula_1} and \eqref{c_j_formula_2} and then sum them up: $$\bigl|\stackrel{(j)}{c_{n}}\bigr|\leq\Bigl(\stackrel{(0)}{\lambda_{n}}\Bigr)^{-\frac{1}{2}}\Biggl[\Biggl(\int\limits_{0}^{1}\Bigl|\stackrel{(j)}{F_{n}}\!\!\!(\xi)\Bigr|d\xi\Biggr)^{2}+\Biggl(\int\limits_{0}^{1}\Bigl|\stackrel{(j)}{F_{n}}\!\!\!(\xi)\Bigr|d\xi+\bigl(\beta/\sqrt{\stackrel{(0)}{\lambda_{n}}}\bigr)\int\limits_{0}^{\alpha}\Bigl|\stackrel{(j)}{F_{n}}\!\!\!(\xi)\Bigr|d\xi\Biggr)^{2}\Biggr]^{1/2}\leq$$
\begin{equation}\label{c_estim}
    \leq \frac{\sqrt{1+(1+\beta/\sqrt{\stackrel{(0)}{\lambda_{n}}})^2}}{\sqrt{\stackrel{(0)}{\lambda_{n}}}}\int\limits_{0}^{1}\Bigl|\stackrel{(j)}{F_{n}}\!\!\!(\xi)\Bigr|d\xi=\frac{\sqrt{1+(1+\beta/\sqrt{\stackrel{(0)}{\lambda_{n}}})^2}}{\sqrt{\stackrel{(0)}{\lambda_{n}}}}\Bigl\|\stackrel{(0)}{F_{n}}\!\!\!(x)\Bigr\|_{0,1}.
\end{equation}
Using estimate \eqref{c_estim} and formulas \eqref{u_j_left_representation}, \eqref{u_j_right_representation} we can easily estimate the value of $\bigl\|\stackrel{(j)}{u_{n}}\bigr\|_{\infty}=\max\limits_{x\in [0,1]}\bigl|\stackrel{(j)}{u_{n}}\!\!\!(x)\bigr|$ like the following
\begin{equation}\label{u_estim}
    \bigl\|\stackrel{(j)}{u_{n}}\bigr\|_{\infty}\leq a\bigl\|\stackrel{(0)}{F_{n}}\!\!\!(x)\bigr\|_{0,1}\leq
\end{equation}
$$\leq a \biggl[\sum\limits_{p=0}^{j-1}\bigl|\stackrel{(j-p)}{\lambda_{n}}\bigr|\bigl\|\stackrel{(p)}{u_{n}}\bigr\|_{\infty}+\left\|q\right\|_{0,1}\bigl\|\stackrel{(j-1)}{u_{n}}\bigr\|_{\infty}+A_{j-1}\Bigl(\widetilde{N}; \bigl\|\stackrel{(0)}{u_{n}}\bigr\|_{\infty},\ldots, \bigl\|\stackrel{(j-1)}{u_{n}}\bigr\|_{\infty}\Bigr)\biggr],$$
where $\|q\|_{0,1}=\int\limits_{0}^{1}|q(x)|d x$ and
\begin{equation}\label{a_n_expression}
   a=a(n)=\biggl(\sqrt{1+(1+\beta/\sqrt{\stackrel{(0)}{\lambda_{n}}})^2}+1\biggr)/\sqrt{\stackrel{(0)}{\lambda_{n}}},
\end{equation}
and $\widetilde{N}(u)=\sum\limits_{p=1}^{\infty}\left|a_{p}\right|u^{p}.$
Similarly to that we can estimate the value of $\bigl|\stackrel{(0)}{\lambda_{n}}\bigr|$ (see formulas \eqref{lambda_j}, \eqref{eq_15})
\begin{equation}\label{lambda_estim}
   \bigl|\stackrel{(j)}{\lambda_{n}}\bigr|\leq b \biggl[\sum\limits_{p=1}^{j-1}\bigl|\stackrel{(j-p)}{\lambda_{n}}\bigr|\bigl\|\stackrel{(p)}{u_{n}}\bigr\|_{\infty}+\left\|q\right\|_{0,1}\bigl\|\stackrel{(j-1)}{u_{n}}\bigr\|_{\infty}+A_{j-1}\Bigl(\widetilde{N}; \bigl\|\stackrel{(0)}{u_{n}}\bigr\|_{\infty},\ldots, \bigl\|\stackrel{(j-1)}{u_{n}}\bigr\|_{\infty}\Bigr)\biggr],
\end{equation}
where (see \eqref{c_square})
\begin{equation}\label{b_n_expression}
   b=b(n)=\max\Bigl\{1, \sqrt{\stackrel{(0)}{\lambda_{n}}}/M_{n},  \sqrt{\stackrel{(0)}{\lambda_{n}}}\sqrt{\widetilde{c}_{n}}/M_{n}\Bigr\}\geq \stackrel{(0)}{\lambda_{n}}\bigl\|\stackrel{(0)}{u_{n}}\!\!\!(x)\bigr\|_{\infty}/M_{n}.
\end{equation}
Using notation
\begin{equation}\label{v_mu_notation}
v_{j}=\frac{b}{a^{j}}\bigl\|\stackrel{{(j)}}{u_n}\bigr\|_\infty \quad  \mbox{and} \quad \mu_{j}=\bigl|\stackrel{(j)}{\lambda_n}\bigr|/a^{j-1},\quad j\in \mathbf{N}\cup\{0\}
\end{equation}
we can rewrite inequalities \eqref{u_estim}, \eqref{lambda_estim} for $j=2,3,\ldots$ in the following form:
\begin{equation}\label{43}
v_{j}\leq\sum_{p=0}^{j-1}\mu_{j-p}v_p+\|q\|_{0,1}v_{j-1}+A_{j-1}(\widetilde{N}_{1};0,v_1,...,v_{j-1}),
\end{equation}
$$\mu_{j}\leq\sum_{p=1}^{j-1}\mu_{j-p}v_p+\|q\|_{0,1}v_{j-1}+A_{j-1}(\widetilde{N}_{1};0,v_1,...,v_{j-1}),$$
where $$\widetilde{N}_{1}(v)\equiv\widetilde{N}\Bigl(\bigl\|\stackrel{(0)}{u_{n}}\bigr\|_{\infty}+v\Bigr).$$
To obtain similar estimates for $v_{1}$ and $\mu_{1}$ let us consider inequalities \eqref{u_estim}, \eqref{lambda_estim} for $j=1$ in more details. Thus,
\begin{equation}\label{D_43}
  {  \bigl\|\stackrel{(1)}{u_n}\bigr\|_\infty \leq a\biggl[\Bigl(\bigl|\stackrel{(1)}{\lambda_n}\bigr| +\left\|q\right\|_{0,1}\Bigr)\bigl\|\stackrel{(0)}{u_n}\bigr\|_\infty+\widetilde{N}\Bigl(\bigl\|\stackrel{(0)}{u_{n}}\bigr\|_{\infty}\Bigr)\biggr],}
\end{equation}
\begin{equation}\label{D_44}
   { \bigl|\stackrel{(1)}{\lambda_n}\bigr| \leq b\biggl[\|q\|_{0,1}\bigl\|\stackrel{(0)}{u_n}\bigr\|_\infty+\widetilde{N}\Bigl(\bigl\|\stackrel{(0)}{u_{n}}\bigr\|_{\infty}\Bigr)\biggr].}
\end{equation}
Using the fact that $\widetilde{N}(0)=0$ and inequality \eqref{D_43} we obtain the following estimate of $u_{1}:$
\begin{eqnarray}\label{D_45}
  v_{1} &\leq & {\left(\mu_{1} +\left\|q\right\|_{0,1}\right)v_{0}+b\widetilde{N}\Bigl(\bigl\|\stackrel{(0)}{u_{n}}\bigr\|_{\infty}\Bigr)=}\nonumber
  \\ &=& \left(\mu_{1} +\left\|q\right\|_{0,1}\right)v_{0}+b\left(\widetilde{N}\Bigl(\bigl\|\stackrel{(0)}{u_{n}}\bigr\|_{\infty}\Bigr)-\widetilde{N}\left(0\right)\right)= \\
   &=& \left(\mu_{1} +\left\|q\right\|_{0,1}\right)v_{0}+b\widetilde{N}^{\prime}\Bigl(\theta\bigl\|\stackrel{(0)}{u_{n}}\bigr\|_{\infty}\Bigr)\bigl\|\stackrel{(0)}{u_{n}}\bigr\|_{\infty}\leq \nonumber\\
   &\leq & {\left(\mu_{1} +\left\|q\right\|_{0,1}\right)v_{0}+\widetilde{N}^{\prime}\Bigl(\bigl\|\stackrel{(0)}{u_{n}}\bigr\|_{\infty}\Bigr)v_{0}=}{\left(\mu_{1} +\left\|q\right\|_{0,1}\right)v_{0}+\widetilde{N}_{1}^{\prime}(0)v_{0}.} \nonumber
\end{eqnarray}


Similarly to \eqref{D_45} we can get the following inequality for $\mu_{1}$:
\begin{equation}\label{D_46}
  { \mu_{1}\leq \left\|q\right\|_{0,1}v_{0}+\widetilde{N}_{1}^{\prime}(0)v_{0}.}
\end{equation}

Let us consider two sequences of positive real numbers $\left\{\overline{v}_{j}\right\}_{j=0}^{\infty}$ and $\left\{\overline{\mu}_{j}\right\}_{j=0}^{\infty}$ defined through the following recurrence formulas
\begin{equation}\label{D_48}
    \overline{\mu}_{1}=\left\|q\right\|_{0,1}\overline{v}_{0}+\widetilde{N}_{1}^{\prime}(0)\overline{v}_{0}=\overline{v}_{1}-\overline{\mu}_{1}\overline{v}_{0},
\end{equation}
\begin{equation}\label{D_47}
    \overline{v}_{1}=\left(\mu_{1} +\left\|q\right\|_{0,1}\right)\overline{v}_{0}+\widetilde{N}_{1}^{\prime}(0)\overline{v}_{0},
\end{equation}
\begin{equation}\label{D_49}
    \overline{\mu}_{j}=\sum_{p=1}^{j-1}\overline{\mu}_{j-p}\overline{v}_p+\|q\|_{0,1}\overline{v}_{j-1} +A_{j-1}(\widetilde{N}_{1},0,\overline{v}_1,...,\overline{v}_{j-1})= \overline{v}_{j}-\overline{\mu}_{j}\overline{v}_{0},
\end{equation}
\begin{equation}\label{44}
\overline{v}_{j}=\sum_{p=0}^{j-1}\overline{\mu}_{j-p}\overline{v}_p+\|q\|_{0,1}\overline{v}_{j-1}+ A_{j-1}(\widetilde{N}_{1},0,\overline{v}_1,...,\overline{v}_{j-1}),
\end{equation}
where (see \eqref{c_square}, \eqref{b_n_expression} and \eqref{v_mu_notation})
$$\overline{v}_0=v_0=v_0(n)=b(n)\bigl\|\stackrel{(0)}{u_{n}}\bigr\|_{\infty}\leq$$
\begin{equation}\label{v_0}
    \leq\max\Bigl\{1, \sqrt{\stackrel{(0)}{\lambda_{n}}}/M_{n},  \sqrt{\stackrel{(0)}{\lambda_{n}}}\sqrt{\widetilde{c}_{n}}/M_{n}\Bigr\} \max\Bigl\{1/\sqrt{\stackrel{(0)}{\lambda_{n}}}, \sqrt{\widetilde{c_{n}}}/\sqrt{\stackrel{(0)}{\lambda_{n}}}\Bigr\}=
\end{equation}
$$=\max\Bigl\{1,  \sqrt{\widetilde{c_{n}}}\Bigr\} \max\Bigl\{1, \sqrt{\widetilde{c_{n}}}, M_{n}/\sqrt{\stackrel{(0)}{\lambda_{n}}}\Bigr\}/M_{n}.$$
Taking into account that the inequality $\widetilde{c_{n}}\leq 1$ implies the estimate $M_{n}\leq 1,$ we get from \eqref{v_0} the following estimate for $\overline{v}_{0}:$
\begin{equation}\label{estim_for_v_0}
   \overline{v}_{0}\leq \max\Bigl\{1, \widetilde{c_{n}}, \sqrt{\widetilde{c_{n}}}M_{n}/\sqrt{\stackrel{(0)}{\lambda_{n}}}  \Bigr\}/M_{n}.
\end{equation}
It is easy to see that $v_{j}\leq\overline{v}_{j}$ and $\mu_{j}\leq\overline{\mu}_{j}$ $\forall j\in \mathbf{N}.$

 Eliminating $\overline{\mu}_{j}$ from system \eqref{D_48} -- \eqref{44}, we obtain recurrence formulas
 \begin{equation}\label{D_51}
    \overline{v}_{1}= \left(1+\overline{v}_{0}\right)\left(\left\|q\right\|_{0,1}\overline{v}_{0}+\widetilde{N}_{1}^{\prime} (0)\overline{v}_{0}\right),
 \end{equation}
\begin{equation}\label{D_50}
  \overline{v}_{j}=\sum_{p=1}^{j-1}\overline{v}_{j-p}\overline{v}_p+(1+\overline{v}_0)\left(\|q\|_{0,1}\overline{v}_{j-1}+A_{j-1}(\widetilde{N}_{1},0,\overline{v}_1,...,\overline{v}_{j-1})\right),\quad j\in \mathbf{N}\backslash\{1\}.
\end{equation}

Denoting by $f(z)$ the series (generating function)
\begin{equation}\label{45}
f(z)=\sum_{j=1}^\infty z^j\overline{v}_j,
\end{equation}
and using equalities \eqref{D_51}, \eqref{D_50} together with $\widetilde{N}_{1}(f(z))=\sum\limits_{j=0}^{\infty}z^{j}A_{j}(\widetilde{N}_{1}; 0, \overline{v}_{1},\ldots, \overline{v}_{j}),$ we arrive at the equation with respect to $f(z):$
\begin{equation}\label{D_52}
    f(z)=\bigl[f(z)\bigr]^{2}+z(1+\overline{v}_0)\bigl[\|q\|_{0,1}\bigl(f(z)+\overline{v}_{0}\bigr)+\widetilde{N}_{1}(f(z))+\widetilde{N}_{1}^{\prime}(0)\overline{v}_{0}-\widetilde{N}_{1}(0)\bigr].
\end{equation}

\subsection{Convergence result for the linear case ($N(u)\equiv 0$)}

Let us first consider the case when $N(u)\equiv 0.$ Equation for generating function \eqref{D_52} can be rewritten in the form

$$f(z)=\bigl(f(z)\bigr)^2+z(1+\overline{v}_0)\|q\|_{0,1}\bigl(f(z)+\overline{v}_{0}\bigr)$$
or, which is more convenience,
$$\bigl(f(z)\bigr)^2-\bigl[1-(1+\overline{v}_0)\|q\|_{0,1}z\bigr] f(z)+\overline{v}_0(1+\overline{v}_0)\|q\|_{0,1}z=0.$$
We have the quadratic equation with respect to $f(z)$ with the roots
$$f_{1,2}(z)=\frac{\left(1-\left(1+\overline{v}_{0}\right)\left\|q\right\|_{0,1}z\right)\pm \sqrt{D}}{2},$$
where
\begin{eqnarray}\label{D_1}
    D&=&(w_{1}-w_{2} z)(1/w_{1}-w_{2} z),\\
    w_{1}&=&1+2\overline{v}_{0}+2\sqrt{\overline{v}_{0}\left(1+\overline{v}_{0}\right)},\quad w_{2}=\left(1+\overline{v}_{0}\right)\left\|q\right\|_{0,1}.\nonumber
\end{eqnarray}
The solution which represents the generating function is
\begin{equation}\label{D_2}
    f\left(z\right)=\frac{1-\left(1+\overline{v}_{0}\right)\left\|q\right\|_{0,1}z- \sqrt{D}}{2}.
\end{equation}
It is obvious that the right-hand side of equality \eqref{D_2} can be expanded as a power series in $z,$ $\forall z\in \left[0, R\right]:$
\begin{equation}\label{power_series_in_z}
   f\left(z\right)=\frac{1}{2}-\frac{\left(1+\overline{v}_{0}\right)\left\|q\right\|_{0,1}}{2}z- \frac{1}{2}\biggl[\sqrt{w_{1}}-\frac{w_{2}}{2\sqrt{w_{1}}}z-\sum\limits_{p=2}^{\infty}\frac{(2p-3)!!}{(2p)!!}w_{1}^{\frac{1}{2}-p}w_{2}^{p}z^{p}\biggr]\times
\end{equation}
$$\times\biggl[\frac{1}{\sqrt{w_{1}}}-\frac{w_{2}\sqrt{w_{1}}}{2}z-\sum\limits_{p=2}^{\infty}\frac{(2p-3)!!}{(2p)!!}w_{1}^{-\frac{1}{2}+p}w_{2}^{p}z^{p}\biggr]=\frac{1}{2}-\frac{\left(1+\overline{v}_{0}\right)\left\|q\right\|_{0,1}}{2}z-$$
$$-\frac{1}{2}+\frac{1}{2}\sum\limits_{j=1}^{\infty}z^{j}w_{2}^{j}\biggl[\frac{(2j-3)!!}{(2j)!!}\Bigl(w_{1}^{j}+w_{1}^{-j}\Bigr)-\sum\limits_{p=1}^{j-1}\frac{(2p-3)!!}{(2p)!!}\frac{(2j-2p-3)!!}{(2j-2p)!!}w_{1}^{-j+2p}\biggr]=$$
$$=\frac{w_{2}\left(\left(w_{1}+w_{1}^{-1}\right)-2\right)}{4}z+$$
$$+\frac{1}{2}\sum\limits_{j=2}^{\infty}z^{j}\left(w_{2}w_{1}\right)^{j}\frac{(2j-3)!!}{(2j)!!}\biggl[\Bigl(1+w_{1}^{-2j}\Bigr)-\frac{(2j)!!}{(2j-3)!!}\sum\limits_{p=1}^{j-1}\frac{(2p-3)!!}{(2p)!!}\frac{(2j-2p-3)!!}{(2j-2p)!!}w_{1}^{-2p}\biggr]$$
where
\begin{eqnarray}\label{21}
    R=R(n)=\frac{1}{w_{1}w_{2}}=\frac{1+2\overline{v}_{0}-2\sqrt{\overline{v}_{0}\left(1+\overline{v}_{0}\right)}}{\left(1+\overline{v}_{0}\right) \left\|q\right\|_{0,1}},\quad \overline{v}_{0}=\overline{v}_{0}(n),
\end{eqnarray}
$$(2k)!!=2\cdot4\cdot\ldots\cdot 2k,\quad (2k+1)!!=1\cdot3\cdot\ldots\cdot 2k+1,\quad (-1)!!\overset{\textmd{def}}{\equiv}1.$$

Taking into account that series representation \eqref{power_series_in_z} is valid for $z=R,$ we arrive at the following inequality for coefficients of generating function \eqref{45}

$$0<R\overline{v}_{1}=\frac{\left(\left(w_{1}+w_{1}^{-1}\right)-2\right)}{4w_{1}}\leq\frac{1}{4}$$
\begin{equation}\label{coefficients_for_tailor_series}
    0<R^{j}\overline{v}_{j}=\frac{(2j-3)!!}{2(2j)!!}\biggl[\Bigl(1+w_{1}^{-2j}\Bigr)-\frac{(2j)!!}{(2j-3)!!}\sum\limits_{p=1}^{j-1}\frac{(2p-3)!!}{(2p)!!}\frac{(2j-2p-3)!!}{(2j-2p)!!}w_{1}^{-2p}\biggr]\leq
\end{equation}
$$\leq \frac{(2j-3)!!}{2(2j)!!}=\alpha_{j}, \quad j=2,3,\ldots $$

Using the Stirling's formula, we can estimate $\alpha_{j}$ like the following
$$\alpha_{j}=\frac{(2j-1)!!}{2(2j-1)(2j)!!}=\frac{(2j)!}{2(2j-1)\left((2j)!!\right)^2}=\frac{(2j)!}{2^{2j+1}(2j-1)\left(j!\right)^{2}}<$$
\begin{equation}\label{estimation_for_alpha}
    <\frac{2\sqrt{\pi j}\left(\frac{2j}{e}\right)^{2j}e^{\frac{1}{24j}}}{2^{2j+1}(2j-1)\left(\sqrt{2\pi j}\left(\frac{j}{e}\right)^{j}\right)^{2}}=\frac{e^{\frac{1}{24j}}}{2(2j-1)\sqrt{\pi j}}\leq\frac{1}{(2j-1)\sqrt{\pi j}}.
\end{equation}

Returning to notation \eqref{v_mu_notation}, we arrive at the estimates (see \eqref{b_n_expression} and \eqref{estim_for_v_0})
\begin{equation}\label{v_j_estimate}
   \bigl\|\stackrel{(j)}{u_n}\bigr\|_\infty =\frac{a^{j} v_{j}}{b}\leq\frac{a^{j}}{b}\overline{v}_{j}\leq \min\Bigl\{1, M_{n}/\sqrt{\stackrel{(0)}{\lambda_{n}}},  M_{n}/\Bigl(\sqrt{\stackrel{(0)}{\lambda_{n}}}\sqrt{\widetilde{c}_{n}}\Bigr)\Bigr\}\alpha_{j}r_{n}^{j}
\end{equation}
and
\begin{equation}\label{mu_j_estimate}
  |\stackrel{(j)}{\lambda_n}|=a^{j-1}\mu_{j}\leq a^{j-1}\overline{\mu}_{j}\leq \frac{\overline{v}_{j}a^{j-1}}{(1+\overline{v}_0)}\leq \frac{\alpha_{j} M_{n}}{\biggl(M_{n}+\max\Bigl\{1, \widetilde{c_{n}}, \sqrt{\widetilde{c_{n}}}M_{n}/\sqrt{\stackrel{(0)}{\lambda_{n}}}  \Bigr\}\biggr)R} r_{n}^{j-1},
\end{equation}
where
\begin{equation}\label{r_n_expression}
    r_{n}=\frac{1+\sqrt{1+(1+\beta/\sqrt{\stackrel{(0)}{\lambda_{n}}})^2}}{\sqrt{\stackrel{(0)}{\lambda_{n}}}R}.
\end{equation}

Inequalities \eqref{v_j_estimate}, \eqref{mu_j_estimate} imply the error estimates
$$\|u_n-\stackrel{m}{u}_n\|_\infty \leq\sum_{j=m+1}^\infty\bigl\|\stackrel{(j)}{u_n}\bigr\|_{\infty}\leq C_{n,m}\min\Bigl\{1/M_{n}, 1/\sqrt{\stackrel{(0)}{\lambda_{n}}},  1/\Bigl(\sqrt{\stackrel{(0)}{\lambda_{n}}}\sqrt{\widetilde{c}_{n}}\Bigr)\Bigr\} r_{n}^{m+1},$$
\begin{equation}\label{22}
|\lambda_n-\stackrel{m}{\lambda_n}|\leq\sum_{j=m+1}^\infty \bigl|\stackrel{(j)}{\lambda_n}\bigr|\leq \frac{C_{n,m}}{\biggl(M_{n}+\max\Bigl\{1, \widetilde{c_{n}}, \sqrt{\widetilde{c_{n}}}M_{n}/\sqrt{\stackrel{(0)}{\lambda_{n}}}  \Bigr\}\biggr)R} r_{n}^m
\end{equation}
provided that
\begin{equation}\label{23}
r_{n}<1,
\end{equation}
where
\begin{equation}\label{C_m_n}
C_{n,m}=\frac{\alpha_{m+1} M_{n}}{1-\frac{1+\sqrt{1+(1+\beta/\sqrt{\stackrel{(0)}{\lambda_{n}}})^2}}{\sqrt{\stackrel{(0)}{\lambda_{n}}}R}}.
\end{equation}

\begin{theorem}\label{Theorem_linear}
  The FD-method described by formulas \eqref{7}, \eqref{eq_15}, \eqref{equation_for_lambda^{(0)}}, \eqref{expression_for_c^{(0)}}, \eqref{u_j_left_representation}, \eqref{u_j_right_representation}, \eqref{lambda_j}, \eqref{c_j_formula_1}, \eqref{c_j_formula_2} converges superexponentially to the exact solution of problem \eqref{eq_1} with $N(u)\equiv 0$ provided that condition \eqref{23} holds. The error estimates for the convergent FD-method are described by formulas  \eqref{v_0}, \eqref{r_n_expression}, \eqref{22}, \eqref{C_m_n}, with $R$ determined by formula \eqref{21} and $\alpha_{j}$ determined by formula \eqref{coefficients_for_tailor_series}.
\end{theorem}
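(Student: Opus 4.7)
The plan is to recognize that virtually all of the scaffolding has already been erected in the preceding subsection, and the proof amounts to assembling the ingredients and performing the final summation. Starting from the componentwise majorization already established between the pairs $(v_j, \mu_j)$ and the generating-function-driven sequences $(\overline{v}_j, \overline{\mu}_j)$ defined by \eqref{D_48}--\eqref{44}, one first specializes the functional equation \eqref{D_52} to the case $N(u)\equiv 0$, whereupon it collapses into a quadratic in $f(z)$. The branch obeying $f(0)=0$ is \eqref{D_2}, and the radius of convergence of its power-series expansion is determined by the smaller positive root of the discriminant $D$, which gives $R=1/(w_{1}w_{2})$ as in \eqref{21}.

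Having this closed form in hand, I would expand $f(z)$ as a power series exactly as in \eqref{power_series_in_z}, extract the coefficients $\overline{v}_j$, and observe that the bracketed expression in \eqref{coefficients_for_tailor_series} is bounded by $1$, which yields $\overline{v}_j\le\alpha_j R^{-j}$; Stirling's formula then supplies the bound \eqref{estimation_for_alpha} on $\alpha_j$. Transcribing these bounds back through the rescaling \eqref{v_mu_notation}, with $b(n)$ from \eqref{b_n_expression} and $\overline{v}_0$ controlled by \eqref{estim_for_v_0}, produces the per-term estimates \eqref{v_j_estimate}, \eqref{mu_j_estimate}, in which the geometric ratio $r_n$ from \eqref{r_n_expression} arises naturally as the product $a(n)/R(n)$ up to the combinatorial factor. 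Under hypothesis \eqref{23} the residual sums $\sum_{j>m}\bigl\|\stackrel{(j)}{u_n}\bigr\|_{\infty}$ and $\sum_{j>m}\bigl|\stackrel{(j)}{\lambda_n}\bigr|$ are each dominated by a convergent geometric series with ratio $r_n$; summing and absorbing the $1/(1-r_n)$ factor into the constant $C_{n,m}$ from \eqref{C_m_n} gives precisely the error bounds \eqref{22}.

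The only step that genuinely demands care, and the main obstacle, is the verification that the inductive majorization $v_j\le\overline{v}_j$, $\mu_j\le\overline{\mu}_j$ does hold for every $j\in\mathbf{N}$, especially at the base step $j=1$: the raw inequalities \eqref{D_43}, \eqref{D_44} involve $\widetilde{N}\bigl(\bigl\|\stackrel{(0)}{u_n}\bigr\|_{\infty}\bigr)$, which in the linear case collapses to zero, so one must check that the definitions \eqref{D_47}, \eqref{D_48} remain consistent with this degeneracy and that the elimination step producing \eqref{D_51}, \eqref{D_50} is valid. Once this is cleared, the inductive step is a straightforward coefficient comparison in $z$. Finally, the \emph{superexponential} character claimed by the theorem comes for free from Theorem \ref{basic_teorem}: since $\stackrel{(0)}{\lambda_n}\ge \pi^2 n^2$, the quantity $r_n$ in \eqref{r_n_expression} decays as $n\to\infty$, so the geometric rate in \eqref{22} improves with the eigenvalue index, which is what the authors mean by the term in the linear setting.
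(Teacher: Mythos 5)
Your proposal follows the paper's own argument essentially verbatim: the linear specialization of \eqref{D_52} to a quadratic, the explicit branch \eqref{D_2} with radius $R=1/(w_{1}w_{2})$ from \eqref{21}, the coefficient bound $R^{j}\overline{v}_{j}\le\alpha_{j}$ sharpened by Stirling's formula, the transcription back through \eqref{v_mu_notation}, and the geometric tail summation under \eqref{23}. The only small slip is interpretive: the \emph{superexponential} character asserted in the theorem comes from the factor $\alpha_{m+1}=O\bigl(m^{-3/2}\bigr)$ multiplying $r_{n}^{m}$ in \eqref{22}, not from the decay of $r_{n}$ as $n$ grows, which is the separate observation recorded in Remark \ref{asymptotic_remark}.
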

\begin{remark}\label{asymptotic_remark}
     From Theorem \ref{basic_teorem} it follows that $\stackrel{(0)}{\lambda_{n}}\geq \pi^{2}n^{2}.$ Using this fact, it is not hard to verify that $M_{n}$ \eqref{M_n} tends to $1/2$ as $n\rightarrow +\infty$ and $\widetilde{c}_{n}$ \eqref{c_square} tends to $1$ as $n\rightarrow +\infty.$ Thus, we can conclude that for fixed $\alpha\in (0,1)$ and $\beta\geq 0$ there exists a positive integer $n_{0}$ such that for all $n\geq n_{0}$ condition \eqref{23} holds true.
\end{remark}

\subsection{Convergence result for the nonlinear case}
In this subsection we derive the conditions providing the convergence of the FD-method for the general case when $N(u)\not \equiv 0.$

   Equality \eqref{D_52} holds for any $z$ in the convergence interval of the power series \eqref{45} with the coefficients $\overline{v}_j$ determined via recursive formulas \eqref{v_0}, \eqref{D_51}, \eqref{D_50}. By $R$ we denote the radius of convergence of this series. We intend to prove that $R$ is nonzero (positive). For this purpose let us consider the inverse mapping $z=f^{-1}$. From \eqref{D_52} we obtain
\begin{equation}\label{D_53}
    z\left(f\right)= \frac{f-f^2}{(1+\overline{v}_0)\bigl[\|q\|_{0,1}(f+\overline{v}_{0})+\widetilde{N}_{1}(f)+\widetilde{N}^{\prime}_{1}\left(0\right)\overline{v}_{0}-\widetilde{N}_{1}\left(0\right)\bigr]}.
\end{equation}
Since $z(f)$ is holomorphic in some interval containing the point $f_{0}=0$ and  $z\left(0\right)=0,$  we can directly calculate the derivative $z^{\prime}\left(0\right)$
\begin{eqnarray}\label{D_54}
    z^{\prime}\left(0\right)&=&\lim\limits_{f\to 0}\frac{z\left(f\right)-z\left(0\right)}{f-0}=\\
    &=&\lim\limits_{f\to 0}\frac{1-f}{(1+\overline{v}_0)\bigl[\|q\|_{0,1}(f+\overline{v}_{0})+\widetilde{N}_{1}(f)+\widetilde{N}_{1}^{\prime}\left(0\right)\overline{v}_{0}-\widetilde{N}_{1}\left(0\right)\bigr]}=\nonumber\\
    &=&\frac{1}{\overline{v}_{0}(1+\overline{v}_0)\bigl[\|q\|_{0,1}+\widetilde{N}_{1}^{\prime}\left(0\right)\bigr]}>0.
    \nonumber
\end{eqnarray}

 Inequality \eqref{D_54} implies that there exists an inverse function $f\left(z\right)$ which is holomorphic in some interval $(-R,R)$ \cite[p. 87]{Analytic_theory}). Now let us prove that series \eqref{45} also converges at the endpoints of the interval. For this purpose it is enough to consider only right endpoint $z=R.$ Conversely, suppose that series \eqref{45} diverges at the point $z=R,$ that is,
 $$\lim\limits_{z\to R-0}f(z)=+\infty.$$
 However, taking into account that equality \eqref{D_52} holds for all $z$ in $(-R, R)$, we get the following contradiction
 \begin{eqnarray}
   1 = \lim\limits_{z\to R-0}\Biggl(f(z)+\Biggr. \\
     \left. + \frac{(1+\overline{v}_0)z\bigl[\|q\|_{0,1}(f(z)+\overline{v}_{0})+\widetilde{N}_{1}(f(z))+\widetilde{N}_{1}^{\prime}\left(0\right) \overline{v}_{0}-\widetilde{N}_{1}\left(0\right)\bigr]}{f(z)}\right) = +\infty.
 \end{eqnarray}
   This contradiction implies the inequality $f(R)<+\infty$. Thus, for some positive constants $c$ and $\varepsilon$ the following inequality holds
$$R^{j}\overline{v}_j\leq\frac{c}{j^{1+\varepsilon}},$$
where the constant $R=R(n)$ is uniquely determined by $\|q\|_{0,1},$ $\overline{v}_{0}=\overline{v}_{0}(n)$ and $\widetilde{N}_{1}\!\!\left(u\right).$

Returning to notation \eqref{v_mu_notation} we arrive at inequalities \eqref{v_j_estimate}, \eqref{mu_j_estimate} and error estimates \eqref{22}, as it was in the linear case. However, in the nonlinear case, parameter $R$ denotes the convergence radius to the power series \eqref{45} satisfying equality \eqref{D_52} and $\alpha_{j}=c/j^{1+\varepsilon}.$  Remark \ref{asymptotic_remark} is consistent with the nonlinear case too. Taking it into account we arrive at the following theorem.

\RestyleAlgo{ruled}
\DontPrintSemicolon

\begin{theorem}\label{Theorem_nonlin}
  For any $\alpha\in(0,1),$ $\beta\geq 0,$ $q(x)\in L_{1}(0,1)$ and $N(u)\in C^{\infty}(\mathbf{R})$ there exists a positive integer $n_{0},$ such that  $\forall n\geq n_{0}$ condition \eqref{23} holds and
  the FD-method described by formulas \eqref{7}, \eqref{eq_15}, \eqref{equation_for_lambda^{(0)}}, \eqref{expression_for_c^{(0)}}, \eqref{u_j_left_representation}, \eqref{u_j_right_representation}, \eqref{lambda_j}, \eqref{c_j_formula_1}, \eqref{c_j_formula_2} converges superexponentially to the exact solution of problem \eqref{eq_1}. The error estimates for the convergent FD-method are described by formulas \eqref{22}, \eqref{r_n_expression}, \eqref{C_m_n}, \eqref{v_0}, where $R$ denotes the convergence radius to the power series \eqref{45} satisfying equality \eqref{D_52} and $\alpha_{j}=c/j^{1+\varepsilon}$ for some positive constants $c$ and $\varepsilon.$
\end{theorem}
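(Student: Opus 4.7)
The plan is to extend the generating-function machinery of the linear case, where the quadratic equation \eqref{D_52} could be solved in closed form, to a setting where $\widetilde{N}_{1}$ is a genuine nonlinearity and $f(z)$ must be defined implicitly. I would begin from equation \eqref{D_52}, rearranged as \eqref{D_53}, expressing $z$ as a function of $f$ near $f=0$. Since $z(0)=0$ and the computation in \eqref{D_54} gives
$$z'(0)=\frac{1}{\overline{v}_{0}(1+\overline{v}_0)\bigl[\|q\|_{0,1}+\widetilde{N}_{1}^{\prime}(0)\bigr]}>0,$$
the analytic inverse function theorem produces a holomorphic inverse $f(z)$ on some open interval $(-R,R)$ with $R>0$. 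Because the recursive definitions \eqref{D_51}--\eqref{D_50} have only nonnegative operations applied to positive inputs, the Taylor coefficients $\overline{v}_j$ are all positive, so $f(z)$ is monotone increasing on $[0,R)$.

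Next I would show that the series actually converges at the endpoint $z=R$. Suppose for contradiction that $\lim_{z\to R^{-}}f(z)=+\infty$. Passing to the limit in \eqref{D_52} after dividing by $f(z)$, the $f(z)^{2}/f(z)=f(z)$ term dominates and forces the right-hand side to $+\infty$, contradicting the identity's left-hand side. Hence $f(R)<+\infty$, from which the singularity analysis of an implicitly defined analytic function (a square-root type branch point coming from $z'(f)=0$ at the critical $f$) yields the coefficient decay $R^{j}\overline{v}_{j}\leq c\,j^{-(1+\varepsilon)}$ for some $c,\varepsilon>0$. This is the main technical obstacle: in the linear case the square-root branch was explicit (the discriminant $D$ in \eqref{D_1}), but in the nonlinear case one must extract it from the structure of $z(f)$ in \eqref{D_53} and show that $f$ does not admit a worse (e.g. essential) singularity at $R$.

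With the bound $R^{j}\overline{v}_{j}\leq c\,j^{-(1+\varepsilon)}$ in hand, the remaining arguments parallel the linear case line by line. Returning to notation \eqref{v_mu_notation} gives $\bigl\|\stackrel{(j)}{u_n}\bigr\|_\infty\leq (a^{j}/b)\overline{v}_{j}$ and $\bigl|\stackrel{(j)}{\lambda_n}\bigr|\leq a^{j-1}\overline{\mu}_{j}\leq a^{j-1}\overline{v}_{j}/(1+\overline{v}_0)$, producing estimates of exactly the form \eqref{v_j_estimate}, \eqref{mu_j_estimate}, except with the new $\alpha_{j}=c/j^{1+\varepsilon}$ and with $R$ now being the convergence radius of \eqref{45} rather than the explicit value \eqref{21}. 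Summing the resulting majorant from $m+1$ to $\infty$ then gives precisely the error estimates \eqref{22} with the constant \eqref{C_m_n}, provided the geometric factor $r_{n}$ from \eqref{r_n_expression} satisfies $r_{n}<1$.

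Finally, to guarantee the existence of $n_{0}$ beyond which \eqref{23} holds, I would invoke Remark \ref{asymptotic_remark}: since $\stackrel{(0)}{\lambda_{n}}\geq\pi^{2}n^{2}\to\infty$, $M_{n}\to 1/2$ and $\widetilde{c}_{n}\to 1$, so by \eqref{v_0} the quantity $\overline{v}_{0}(n)$ stays bounded and $\bigl\|\stackrel{(0)}{u_{n}}\bigr\|_{\infty}\to 0$. Consequently the coefficient $\widetilde{N}_{1}^{\prime}(0)=\widetilde{N}^{\prime}(\|\stackrel{(0)}{u_{n}}\|_{\infty})$ stays bounded, so $z'(0)$ in \eqref{D_54} stays bounded away from $0$ and the radius $R=R(n)$ is bounded below by some positive constant uniform in $n$. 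Combined with $\sqrt{\stackrel{(0)}{\lambda_{n}}}\to\infty$ in the numerator of \eqref{r_n_expression}, this forces $r_{n}\to 0$, hence $r_n<1$ for all $n\geq n_{0}$, completing the proof.
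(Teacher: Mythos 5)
Your proposal follows essentially the same route as the paper: inverting \eqref{D_52} to get \eqref{D_53}, computing $z'(0)>0$ as in \eqref{D_54}, applying the analytic inverse function theorem to obtain a positive radius $R$, ruling out divergence at $z=R$ by the same contradiction argument, and then transplanting the linear-case estimates with $\alpha_{j}=c/j^{1+\varepsilon}$ and invoking Remark \ref{asymptotic_remark} for the existence of $n_{0}$. If anything, you are slightly more careful than the paper at the one delicate step — passing from $f(R)<+\infty$ to the quantitative decay $R^{j}\overline{v}_{j}\leq c\,j^{-(1+\varepsilon)}$ — which the paper asserts without the square-root branch-point justification you sketch.
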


\section{Algorithm of the FD-method: software implementation}\label{s_5}

In this section we discuss the question of possible software implementation for the proposed FD-method.

Generally speaking, to apply FD-method to problem \eqref{eq_1} we should perform two main steps: the first one is to solve the basic problem \eqref{eq_12} -- \eqref{eq_14} and the second one is to calculate a certain number of corrections $\stackrel{(j)}{\lambda_{n}},$ $\stackrel{(j)}{u_{n}}\!\!(x),$ sufficient to achieve the required accuracy. The second step includes the repeated application of formulas \eqref{u_j_left_representation}, \eqref{u_j_right_representation}, \eqref{lambda_j}, \eqref{c_j_formula_1},\eqref{c_j_formula_2}, which contain the operator of integration. Hence, the second step can be executed analytically (using analytical integration) only for the rare special cases, and in practice we almost always have to use numerical integration instead. Using numerical integration, however, we approximate eigenfunction on the quadrature mesh only, whereas analytical integration yields us the uniform approximation on the interval $(0,1).$ In the algorithm described below we approximate the integral operators using Sinc quadratures and Stenger's formula (see \cite{Stenger_new}):

\begin{equation}\label{Stenger's_formula}
   \int_{a}^{z_{k}}f(x)d x\approx h_{s}\sum\limits_{p=-K}^{K}\delta_{k-p}f\left(\frac{a+be^{p h_{s}}}{1+e^{p h_{s}}}\right)\frac{\left(b-a\right)}{\left(e^{-p h_{s}/2}+e^{p h_{s}/2}\right)^{2}},\footnote{ The function $f(x)$ is needs to be sufficiently smooth on (a, b), see \cite{Stenger_new}. }
\end{equation}
where
$$z_{k}=\frac{a+be^{h_{s}k}}{1+e^{h_{s}k}},\;k=-K \ldots, K,\quad \delta_{k}=\frac{1}{2}+\int\limits_{0}^{k}\frac{\sin\left(\pi t\right)}{\pi t}d t, \; k=-2K\ldots 2K,$$
and $h_{s}=\sqrt{\frac{\pi d}{\mu K}}$ for some $0<d<\pi,$ $\mu>0.$\footnote{For discussion on the optimal choice of value for $h_{s},$ and parameters $d$ and $\mu$ see \cite{Stenger_new}, \cite{Okayama}.}

Also, we make the assumption that function $q(x)$ is analytical on the intervals $(0, \alpha),$ $(\alpha, 1)$ and might has the integrable singularities at the points $0,$ $\alpha,$ $1$ only. However, the proposed algorithm can be easily generalized on the case when $q(x)$ has any finite number of the integrable singularities on $(0,1).$ In the reminder part of the section we will use the notation
\begin{equation}\label{alg_implement_12}
    z_{1,j}=\frac{\alpha e^{h_{s}j}}{1+e^{h_{s}j}},\quad
    z_{2,j}=\frac{\alpha+e^{h_{s}j}}{1+e^{h_{s}j}}, \end{equation}
$$    \mu_{1,j}=\frac{\alpha}{\left(e^{-jh_{s}/2}+e^{jh_{s}/2}\right)^{2}},\quad
\mu_{2,j}=\frac{1-\alpha}{\left(e^{-jh_{s}/2}+e^{jh_{s}/2}\right)^{2}},\quad
$$
$$\nu_{n}=\left[\int\limits_{-1}^{1}\left(u_{n}^{(0)}(x)\right)^{2}d x\right]^{-1},\quad j=-K,\ldots, K.$$
Evidently, the highest FD-method's accuracy which can be achieved using Algorithm \ref{Algo_1} presented below is restricted by two factors: the accuracy of quadrature formulas and the accuracy with which the basic problem can be solved\footnote{Here we intentionally do not consider yet another factor: the machine precision. We assume that it is high enough (arbitrary). The arbitrary precision in the computational software can be provided via the libraries for multiple-precision floating-point computations, like MPFR \cite{MPFR}.}. To emphasize this fact, we will denote by $\Lambda_{n}^{(p)}$ the value of $\stackrel{(p)}{\lambda_{n}}$ perturbed with this two factors. For the same reasons we will use the notation $U^{n,(p)}_{i,j}$ instead of $\stackrel{(p)}{u_{n}}\!\!(z_{i,j}),$  notation $\Lambda_{n}^{p}$ instead of $\stackrel{p}{\lambda_{n}}$ and notation $U^{n,p}_{i,j}$ instead of $\stackrel{p}{u_{n}}\!\!(z_{i,j}).$

\begin{algorithm}[H]\label{Algo_1}
\caption{FD-method's algorithm for solving problem \eqref{eq_1}}
\SetKwInOut{Input}{input}\SetKwInOut{Output}{output}
\Input{The real numbers $\alpha\in (0,1),$ $\beta>0,$ $\varepsilon>0,$ functions $q(x)\in L_{1}(0,1),$ $N(u)\in C^{\infty}(\mathbf{R}),$ $N(0)=0;$\\
the number $n>0$ of eigensolution to be approximated;\\ the rank $r$ of the FD-method;}
\Output{The value $\Lambda_{n}^{r}$ such that $\bigl|\overset{r}{{\lambda}_{n}}-\Lambda_{n}^{r}\bigl|\leq \varepsilon$ and the array of values $U_{i,j}^{n,r},$ such that  $\bigl|\stackrel{r}{u_{n}}(z_{i,j})-U_{i,j}^{n,r}\bigr|\leq \varepsilon,$ $i=1,2,$ $j=-K.\ldots, K;$  }
\Begin{
 \nl  {\bf SolveBasicProblem($n$);}\tcp*{Solve the basic problem}
 \nl  {\bf FindParametersForQuadratureFormulas;}\;
 \nl  \For {$k:=1$ \KwTo $r$}{
\nl     {\bf FindNextCorrectionForEigenvalue($k$);}\tcp{Find $\Lambda_{n}^{(k)}$}
  \nl     {\bf FindParameterC($k$);}\tcp{Find $c_{n}^{(k)}$}
   \nl    {\bf FindNextCorrectionForEigenfunction($k$);}\tcp{Find $U_{i,j}^{n,(k)}$}}
\nl          $\Lambda_{n}^{r}:=\sum\limits_{p=0}^{r}\Lambda_{n}^{(p)};$\;
\nl          \For {$j:=-K$ \KwTo $K$}{
\nl               $U_{1,j}^{n,r}:=\sum\limits_{p=0}^{r}U_{1,j}^{n,(p)};\;$
              $U_{2,j}^{n,r}:=\sum\limits_{p=0}^{r}U_{2,j}^{n,(p)};$
          }
}

\end{algorithm}

Let us consider Algorithm \ref{Algo_1} in more detail.

 Procedure {\bf SolveBasicProblem($n$)} calculates the approximation for solution to the basic problem \eqref{eq_12} -- \eqref{eq_14}. If $n\alpha\in \mathbf{N}$ then $\stackrel{(0)}{\lambda_{n}}=\Lambda_{n}^{(0)}=\pi^{2} n^{2}$ and $\stackrel{(0)}{c_{n}}=C_{n}^{(0)}=-\cos(\pi n)/\sqrt{\Lambda_{n}^{(0)}}=(-1)^{n+1}/\sqrt{\Lambda_{n}^{(0)}};$ if not, then the approximation $\Lambda_{n}^{(0)}$ for $\stackrel{(0)}{\lambda_{n}}$ can be found from equation \eqref{equation_for_lambda^{(0)}} via Newton's method and approximation $C_{n}^{(0)}$ for the parameter $\stackrel{(0)}{c_{n}}$ can be found using formula \eqref{expression_for_c^{(0)}}.  After that, the approximation $U_{i,j}^{n,(0)}$ for $\stackrel{(0)}{u_{n}}\!\!\!(z_{i,j})$ can be calculated using formulas \eqref{eq_15}, \eqref{expression_for_c^{(0)}} as it is described in the procedure.

 Procedure {\bf FindParametersForQuadratureFormulas} calculates the values of parameters $h_{s}$ and $K,$ such that when quadrature formula \eqref{Stenger's_formula} is applied to integrals \eqref{u_j_left_representation}, \eqref{u_j_right_representation}, \eqref{lambda_j}, \eqref{c_j_formula_1},\eqref{c_j_formula_2} it provides the required accuracy $\varepsilon.$ Actually, the parameters $h_{s}$ and $K$ can be found through the a posteriori error analysis of quadrature formula \eqref{Stenger's_formula} applied to the integral $\int\limits_{0}^{1}\Bigl(q(x)+\stackrel{(0)}{u_{n}}\!\!\!(x)+N\Bigl(\stackrel{(0)}{u_{n}}\!\!\!(x)\Bigr)\Bigr)d x$ (see \cite{Stenger_new}).

\begin{procedure}[H]
\caption{SolveBasicProblem($n$)}
\SetKwInOut{Input}{input}
\SetKwInOut{Output}{output}
\Input{The number $n>0$ of eigensolution  to be approximated;}
\Output{The value $\Lambda^{(0)}_{n}$ such that $\bigl|\overset{(0)}{{\lambda}_{n}}-\Lambda^{(0)}_{n}\bigl|\leq \varepsilon$ and the array of values $U_{i,j}^{n,(0)},$ such that  $\bigl|\stackrel{(0)}{u_{n}}(z_{i,j})-U_{i,j}^{n,(0)}\bigr|\leq \varepsilon,$ $i=1,2,$ $j=-K\ldots, K;$  }
\Begin{
\nl \eIf{$n\alpha \in \mathbf{N}$}{
\nl$\Lambda^{(0)}_{n}:=\pi n;\;$$C_{n}^{(0)}:=(-1)^{n+1}/\Lambda_{n}^{(0)};$
}
{
\nl$\Lambda_{n}^{(0)}:=\pi n;\;$ $\Lambda:=0;\;$ \tcp*{Using the Newton's method}
\nl\While {$|\Lambda_{n}^{(0)}-\Lambda|>\varepsilon$}{
\nl   $\Lambda:=\Lambda_{n}^{(0)};\;$
  $\Lambda_{n}^{(0)}:=\Lambda-\frac{f(\Lambda)}{f^{\prime}(\Lambda)};$
  \tcp*{ Here $f(\lambda)=\frac{\sin(\lambda)}{\sin(\lambda\alpha)\sin(\lambda(1-\alpha))}+\frac{\beta}{\lambda}$}
}
\nl$C_{n}^{(0)}:=\frac{\sin(\Lambda_{n}^{(0)}\alpha)}{\Lambda_{n}^{(0)}\sin(\Lambda_{n}^{(0)}(1-\alpha))};$
}
\tcp{Compute approximations for $\stackrel{(0)}{u_{n}}(z_{i,j}),$ using formulas \eqref{eq_15}}
\nl     \For {$j:=-K$ \KwTo $K$}{
\nl           $U_{1,j}^{n,(0)}:=\sin(\Lambda_{n}^{(0)}z_{1,j})/\Lambda_{n}^{(0)};\;$
$U_{2,j}^{n,(0)}:=C_{n}^{(0)}\sin(\Lambda_{n}^{(0)}(1-z_{2,j}));$
     }
\nl     $\Lambda_{n}^{(0)}:=sqr(\Lambda_{n}^{(0)});$
}
\end{procedure}

 Procedure {\bf FindNextCorrectionForEigenvalue($k$)} calculates the approximation $\Lambda_{n}^{(k)}$ for $\stackrel{(k)}{\lambda_{n}}$ using formula \eqref{lambda_j}. The integral in the right side of this formula is approximated using {\it tanh} rule, see \cite{Kahaner_Nesh}. Also, this procedure computes the approximation $F_{i,j}$ for $\stackrel{(k)}{F_{n}}\!\!(z_{i,j})$ (see formula \eqref{n_Recurrence_sequence_of_equations}).

 Procedure {\bf ComputeParameterC($k$)} computes the approximation $C_{n}^{(k)}$ for parameter $c_{n}^{(k)}$ using formula  \eqref{c_j_formula_1} if $\alpha n\in \mathbf{N}$ or formula  \eqref{c_j_formula_2} if $\alpha n \notin \mathbf{N}.$
In the case when $\alpha n\in \mathbf{N}$ formula \eqref{c_j_formula_2} was used, though, in a slightly simplified form. Taking into account that in this case $\stackrel{(0)}{\lambda_{n}}=\pi^{2}n^{2},$ this formula was rewritten as following
\begin{equation}\label{c_simplified}
    \stackrel{(j)}{c_{n}}=(-1)^{n+1}\int\limits_{0}^{1}\frac{\cos\Bigl(\pi n\xi\Bigr)}{\pi n}\stackrel{(j)}{F_{n}}\!\!\!(\xi)d\xi+(-1)^{n}\beta
    \int\limits_{0}^{\alpha}\frac{\sin\Bigl(\pi n\xi\Bigr)}{(\pi n)^{2}}\stackrel{(j)}{F_{n}}\!\!\!(\xi)d\xi.
\end{equation}

Eventually, procedure {\bf FindNextCorrectionForEigenfunction($k$)} calculates the approximation $U_{i,j}^{n,(k)}$ for $\stackrel{ (k)}{u_{n}}\!\!(z_{i,j})$ using formulas \eqref{u_j_left_representation} and \eqref{u_j_right_representation} together with Stenger's formula \eqref{Stenger's_formula}.

\begin{procedure}[H]
\caption{FindNextCorrectionForEigenvalue($k$)}
\SetKwInOut{Input}{input}
\SetKwInOut{Output}{output}
\Input{$\Lambda_{n}^{(p)}$ for $p=0,1,\ldots, k-1;$\\
  $U_{i,j}^{n,(p)}$  for $i=1,2,$ $j=-K\ldots K,$ $p=0,\ldots,k-1;$
}
\Output{$\Lambda_{n}^{(k)};$
  $F_{i,j}\approx \stackrel{(k)}{F_{n}}\!\!\!(z_{i,j}),$ for $i=1,2,$ $j=-K\ldots K;$  }
\Begin{
\nl\For {$j:=-K$ \KwTo $K$} {
\nl    $F_{1,j}:=-\sum\limits_{p=1}^{k-1}\Lambda_{n}^{(p)}U_{1,j}^{n,(p)}+q(z_{1,j})U_{1,j}^{n,(k-1)}+A_{k-1}(N;U_{1,j}^{n,(0)},\ldots,U_{1,j}^{n,(k-1)});$\;
\nl    $F_{2,j}:=-\sum\limits_{p=1}^{k-1}\Lambda_{n}^{(p)}U_{2,j}^{n,(p)}+q(z_{2,j})U_{2,j}^{n,(k-1)}+A_{k-1}(N;U_{2,j}^{n,(0)},\ldots,U_{2,j}^{n,(k-1)});$    }
\nl  $\Lambda_{n}^{(k)}:=0;$\tcp{Compute $\stackrel{(k)}{\lambda_{n}}$ (see \eqref{lambda_j}) via the tanh rule, see \cite{Kahaner_Nesh}}
\nl  \For {$j:=-K$ \KwTo $K$} {
\nl     $\Lambda_{n}^{(k)}:=\Lambda_{n}^{(k)}+U_{1,j}^{n,(0)}F_{1,j}\mu_{1,j}+U_{2,j}^{n,(0)}F_{2,j}\mu_{2,j};$
  }
\nl  $\Lambda_{n}^{(k)}:=h_{s}\Lambda_{n}^{(k)}/\nu_{n};$\;
\nl\For {$j:=-K$ \KwTo $K$} {
\nl    $F_{1,j}:=F_{1,j}-\Lambda_{n}^{(k)}U_{1,j}^{n,(0)};\;$
    $F_{2,j}:=F_{2,j}-\Lambda_{n}^{(k)}U_{2,j}^{n,(0)};$}
}
\end{procedure}

\begin{procedure}[H]
\caption{ComputeParameterC($k$)}
\SetKwInOut{Input}{input}
\SetKwInOut{Output}{output}
\Input{$\Lambda_{n}^{(0)};\;$ $F_{i,j}$ for $i=1,2,$ $j=-K\ldots K;$}
\Output{$C_{n}^{(k)}\approx \stackrel{(k)}{c_{n}},$ see formulas \eqref{c_j_formula_1} and \eqref{c_j_formula_2};}
\Begin{
\nl $\Lambda:=\sqrt{\Lambda_{n}^{(0)}};\;$ $C^{(k)}_{n}:=0;$\;
\nl \eIf {$n\alpha \in \mathbf{N}$}{
 \tcp{\ldots then $\Lambda=\pi n$ and we use formula \eqref{c_simplified}}
\nl  \For{$j:=-K$ \KwTo $K$}{
\nl        $C^{(k)}_{n}:=C^{(k)}_{n}-\left(\cos(\Lambda z_{1,j})-\beta\sin(\Lambda z_{1,j})/\Lambda\right)F_{1,j}\mu_{1,j}-\cos(\Lambda z_{2,j})F_{2,j}\mu_{2,j};$
      }
\nl $C_{n}^{(k)}:=(-1)^{n}h_{s}C_{n}^{(k)}/\Lambda;$
 }{
 \tcp{\ldots else we use formula \eqref{c_j_formula_1}}
\nl  \For{$j:=-K$ \KwTo $K$}{
\nl        $C^{(k)}_{n}:=C^{(k)}_{n}+\sin(\Lambda(\alpha-z_{1,j}))F_{1,j}\mu_{1,j}+\sin(\Lambda(\alpha-z_{2,j}))F_{2,j}\mu_{2,j};$
      }
\nl   $C^{(k)}_{n}:=h_{s}C^{(k)}_{n}/\left(\Lambda\sin(\Lambda(1-\alpha))\right);$
 }

}
\end{procedure}
\newpage
\begin{procedure}[H]
\caption{FindNextCorrectionForEigenfunction($k$)}
\SetKwInOut{Input}{input}
\SetKwInOut{Output}{output}
\Input{$\Lambda_{n}^{(0)};\;$ $C_{n}^{(k)};\;$ $F_{i,j}$ for $i=1,2,$ $j=-K\ldots K;$}
\Output{$U_{i,k}^{n,(k)}\approx \stackrel{(k)}{u_{n}}(z_{i,j})$ for $i=1,2,$ $j=-K\ldots K,$  see  \eqref{u_j_left_representation} and \eqref{u_j_right_representation};}
\Begin{
\nl $\Lambda:=\sqrt{\Lambda^{(0)}_{n}};$\;
\nl  \For {$p:=-K$ \KwTo $K$}{
\nl        $\mathcal{I}_{1}:=0;\;$ $\mathcal{I}_{2}:=0;\;$\;
\nl       \For {$j:=-K$ \KwTo $K$} {
\nl            $\mathcal{I}_{1}:=\mathcal{I}_{1}+\cos(\Lambda z_{1,j})F_{1,j}\mu_{1,j}\delta_{p-j};\;$ \tcp{$\mathcal{I}_{1}\approx \int\limits_{0}^{z_{1,p}}\cos(\Lambda \xi)\stackrel{(k)}{F_{n}}(\xi) d\xi$}
\nl            $\mathcal{I}_{2}:=\mathcal{I}_{2}+\sin(\Lambda z_{1,j})F_{1,j}\mu_{1,j}\delta_{p-j};\;$ \tcp{$\mathcal{I}_{2}\approx \int\limits_{0}^{z_{1,p}}\sin(\Lambda \xi)\stackrel{(k)}{F_{n}}(\xi) d\xi$}
       }
\nl       $U_{1,p}^{n,(k)}:=h_{s}\Bigl(\sin(\Lambda z_{1,p})\mathcal{I}_{1}-\cos(\Lambda z_{1,p})\mathcal{I}_{2}\Bigr)/\Lambda;$\tcp{See formula \eqref{u_j_left_representation}}
  }

\nl  \For {$p:=-K$ \KwTo $K$}{
\nl        $\mathcal{I}_{1}:=0;\;$ $\mathcal{I}_{2}:=0;\;$\;
\nl       \For {$j:=-K$ \KwTo $K$} {
\nl            $\mathcal{I}_{1}:=\mathcal{I}_{1}+\cos(\Lambda z_{2,j})F_{2,j}\mu_{2,j}\delta_{j-p};\;$ \tcp{$\mathcal{I}_{1}\approx \int\limits_{z_{2,p}}^{1}\cos(\Lambda \xi)\stackrel{(k)}{F_{n}}(\xi) d\xi$}
\nl            $\mathcal{I}_{2}:=\mathcal{I}_{2}+\sin(\Lambda z_{2,j})F_{2,j}\mu_{2,j}\delta_{j-p};\;$ \tcp{$\mathcal{I}_{2}\approx \int\limits_{z_{2,p}}^{1}\sin(\Lambda \xi)\stackrel{(k)}{F_{n}}(\xi) d\xi$}
       }
\nl       $U_{2,p}^{n,(k)}:=C_{n}^{(k)}\sin(\Lambda(1-z_{2,p}))-h_{s}\Bigl(\sin(\Lambda z_{2,p})\mathcal{I}_{1}-\cos(\Lambda z_{2,p})\mathcal{I}_{2}\Bigr)/\Lambda;$\tcp{See formula \eqref{u_j_right_representation}}
  }

}
\end{procedure}

Using the proposed algorithm and the MPFR library \cite{MPFR},  a C++ application was developed for solving problems of type \eqref{eq_1}. The numerical example presented in the next section was prepared via this application.

\section{Numerical example}\label{s_6}
Let us consider the Sturm-Liouville problem \eqref{eq_1} with
\begin{equation}\label{Ex_1_problem}
  \alpha=\frac{1}{2},\quad \beta=2,\quad q(x)=\frac{1}{\sqrt{\left|0.7-x\right|}}+\frac{1}{\sqrt{\left|0.1-x\right|}}+\frac{1}{\sqrt{\left|0.3-x\right|}}+\frac{1}{\sqrt{\left|0.4-2x\right|}},
\end{equation}
$$N(u)=u^{9}.$$

For the error control we will use the following functionals

\begin{equation}\label{Error_control}
    \stackrel{m}{r_{n}}=\int\limits_{0}^{\alpha}\left|1-\frac{d \stackrel{(m)}{u_{n}}\!\!\!(\xi)}{d \xi}+\int\limits_{0}^{\xi}\left[q(\xi_{1})\stackrel{m}{u_{n}}\!\!(\xi_{1})-\lambda_{n} \stackrel{m}{u_{n}}\!\!(\xi_{1})+N(\stackrel{m}{u_{n}}\!\!(\xi_{1}))\right]d \xi_{1}\right|d\xi+
\end{equation}
$$+\int\limits_{\alpha}^{1}\left|1-\frac{d \stackrel{(m)}{u_{n}}\!\!\!(\xi)}{d \xi}+\beta \stackrel{m}{u_{n}}\!\!(\alpha)+\int\limits_{0}^{\xi}\left[q(\xi_{1})\stackrel{m}{u_{n}}\!\!(\xi_{1})-\lambda_{n} \stackrel{m}{u_{n}}\!\!(\xi_{1})+N(\stackrel{m}{u_{n}}\!\!(\xi_{1}))\right]d \xi_{1}\right|d\xi,$$
\begin{equation}\label{Jump_control}
    \stackrel{m}{\Delta}_{n}\!\!\!(\alpha)=\frac{d}{dx}\stackrel{m}{u_{n}}\!\!(\alpha+0)-\frac{d}{dx}\stackrel{m}{u_{n}}\!\!(\alpha-0)-\beta\stackrel{m}{u_{n}}\!\!(\alpha).
\end{equation}

To tackle problem \eqref{eq_1} with \eqref{Ex_1_problem} the algorithm described in section \ref{s_5} has been slightly modified. The intervals $(0, \alpha)$ and $(\alpha, 1)$ have been split  into subintervals $(0, 0.1),$ $(0.1, 0.2),$ $(0.2, 0.3),$ $(0.3, \alpha )$ $(\alpha, 0.7),$ $(0.7, 1)$ with respect to the singular points of function $q(x).$ After that all integrals over the interval $(0,1)$ were substituted by the sum of integrals over each subinterval.  The approximations for the ten eigenvalues and eigenfunctions were successfully  found and presented in Tab. \ref{Tabl_1} and Fig. \ref{pic_1} -- \ref{pic_3}. The graphs presented on Fig. \ref{pic_6}, \ref{pic_7} confirm the exponential nature of the FD-method's convergence rate.
\begin{table}[htbp]
\caption{Example 1. The results of computations.}\label{Tabl_1}
\begin{tabular}{|c|c|c|c|c|c|c|}
  \hline
  $n$ & $m$ & $\stackrel{m}{\lambda}_{n}$ & $\|\stackrel{(m)}{u_{n}}\!\!(x)\|_{\infty}$ & $|\stackrel{(m)}{\lambda_{n}}|$ & $\stackrel{m}{r_{n}}$ & $ \stackrel{m}{\Delta_{n}}\!\!(\alpha)$ \\
  \hline
   1& 10 & 23.437363200234028176652 & 1.5e-11 & 7.6e-10 & 2.5e-11 & 2.4e-26\\
  \hline
   2& 10 & 50.879953432153777724296 & 7.5e-12 & 2.4e-10 & 2.0e-11 & 5.6-27  \\
  \hline
   3& 10 & 102.294039773949565868154 & 3.0e-13 & 5.7e-11 & 1.9e-13 & 9.7e-27  \\
  \hline
   4& 10 & 167.932111361326104363494 & 2.2e-16 & 1.2e-13 & 1.1e-16 & 1.9e-27 \\
  \hline
  5 & 10 & 261.703789042290324125067 & 2.5e-17 & 6.8e-16 & 3.3e-17 & 8.3e-27 \\
  \hline
  6 & 10 & 365.290665054662412777331 & 1.5e-17 & 9.5e-16 & 3.7e-18 & 5.3e-27 \\
  \hline
   7 & 10 & 497.311217072847814939907 & 6.3e-19 & 1.2e-16 & 1.1e-19 & 7.7e-27 \\
  \hline
   8 & 10 & 642.305601325675356973240 & 6.7e-19 & 4.6e-17 & 1.3e-19 & 1.1e-26 \\
  \hline
    9 & 10 & 813.233561353244869046018 & 7.3e-21 & 1.7e-17 & 9.1e-22 & 8.5e-27 \\
  \hline
    10 & 10 & 995.761252385458344653891 & 3.7e-21 & 2.9e-18 & 4.4e-22 & 1.8e-28 \\
  \hline
\end{tabular}
\end{table}

Also, to illustrate the fact that the method's convergence rate increases along with the number of approximating eigenpair we have computed linear approximations to the functions $y_{u, n}=y_{u, n}(m)=\ln\Bigl(\bigl\|\stackrel{(m)}{u_{n}}\!\!\!(x)\bigr\|\Bigr),$
$y_{\lambda, n}=y_{\lambda, n}(m)=\ln\Bigl(\bigl|\stackrel{(m)}{\lambda_{n}}\bigr|\Bigr),$
$y_{r,n}=y_{r, n}(m)=\ln\Bigl(\stackrel{m}{r_{n}}\Bigr):$
$$\widetilde{y}_{u, n}=a_{u,n}m+b_{u,n}\approx y_{u, n}(m),$$
$$\widetilde{y}_{\lambda, n}=a_{\lambda,n}m+b_{\lambda,n}\approx y_{\lambda, n}(m),$$
$$\widetilde{y}_{r, n}=a_{r,n}m+b_{r,n}\approx y_{r, n}(m).$$
These approximations have been obtained via the method of least squares using the values of corresponding functions at the points $m=0,1,\ldots, 10.$ The parameters of linear approximations together with approximation errors  $e_{u,n}, e_{\lambda, n}, e_{r,n}$ (see \eqref{deviations}) are presented in Tab. \ref{LeastSqTab}.
$$e_{u,n}=\max\limits_{m=0,1,\ldots, 10}\bigl|a_{u,n}m+b_{u,n}- y_{u, n}(m)\bigr|,$$
\begin{equation}\label{deviations}
e_{\lambda,n}=\max\limits_{m=0,1,\ldots, 10}\bigl|a_{\lambda,n}m+b_{\lambda,n}- y_{\lambda, n}(m)\bigr|,
\end{equation}
$$e_{r,n}=\max\limits_{m=0,1,\ldots, 10}\bigl|a_{r,n}m+b_{r,n}- y_{r, n}(m)\bigr|.$$
From table \ref{LeastSqTab} it follows that the coefficients $a_{u,n},$ $a_{\lambda, n},$ $a_{r,n}$ decrease almost monotonically as parameter $n$ increases. Taking into account the comparatively small amounts of approximation errors, we can conclude that the convergence rate of the method has an exponential nature and it does increase as $n$ tends to infinity.

\begin{table}[htbp]
\caption{Example 1. The parameters of linear approximations.}\label{LeastSqTab}
\begin{tabular}{|c||c|c|c||c|c|c||c|c|c|}
  \hline
  $n$ & $a_{u,n}$ & $b_{u,n}$ & $e_{u,n}$ & $a_{\lambda, n}$ & $b_{\lambda, n}$ & $e_{\lambda, n}$ & $a_{r,n}$ & $b_{r,n}$ & $e_{r,n}$ \\
  \hline
  1 & $-2.3$ & $-1.7$ & $2.0$ & $-2.4$ & $2.8$ & $2.2$ & $-2.4$ & $-2.0$ & $2.1$ \\
  \hline
   2 & $-2.3$ & $-2.0$ & $0.6$ & $-2.3$ & $1.7$ & $5.7$ & $-2.3$ & $-2.8$ & $1.9$ \\
  \hline
   3 & $-2.6$ & $-2.4$ & $0.8$ & $-2.8$ & $3.5$ & $1.8$ & $-2.7$ & $-3.0$ & $1.3$ \\
  \hline
    4& $-3.3$ & $-2.5$ & $0.9$ & $-3.5$ & $4.1$ & $1.8$ & $-3.4$ & $-3.1$ & $0.9$ \\
  \hline
    5& $-3.5$ & $-2.4$ & $0.3$ & $-3.8$ & $4.9$ & $2.1$ & $-3.4$ & $-4.0$ & $0.8$ \\
  \hline
    6& $-3.6$ & $-2.9$ & $0.6$ & $-4.0$ & $4.7$ & $4.8$ & $-3.6$ & $-4.1$ & $0.8$ \\
  \hline
    7& $-3.9$ & $-2.7$ & $0.5$ & $-4.1$ & $5.0$ & $2.4$ & $-4.0$ & $-4.0$ & $0.4$ \\
  \hline
    8& $-3.9$ & $-3.1$ & $0.6$ & $-4.2$ & $5.5$ & $2.2$ & $-3.9$ & $-5.0$ & $1.1$ \\
  \hline
    9& $-4.3$ & $-3.3$ & $0.6$ & $-4.5$ & $5.2$ & $1.6$ & $-4.4$ & $-4.7$ & $0.9$ \\
  \hline
    10& $-4.4$ & $-3.0$ & $0.7$ & $-4.7$ & $5.2$ & $2.5$ & $-4.4$ & $-4.7$ & $1.0$ \\
  \hline
\end{tabular}
\end{table}

On the other hand, the values of parameters $\overline{v}_{0}(n),$ (see \eqref{v_0}), $R(n)$ (the convergence radius for generating function \eqref{45}, \eqref{D_52}) and $r_{n}$ (see \eqref{23}) calculated directly for the case of problem \eqref{eq_1}, \eqref{Ex_1_problem} indicate that the FD-method for eigenpairs with numbers $n=1,2,\ldots, 10$ have to be divergent ($r_{n}>1$), see Tab. \ref{Table_v_R_r}., whereas, in fact, it is convergent. This means that the convergence conditions stated in Theorems \ref{Theorem_linear}, \ref{Theorem_nonlin} are essentially overestimated.
\begin{table}[htbp]
\caption{Example 1. The values of parameters $\overline{v}_{0}(n),$ $R(n)$ and $r_{n}.$}\label{Table_v_R_r}
\begin{tabular}{|c|c|c|c||c|c|c|c|}
                   \hline
                   $n$ & $\overline{v}_{0}(n)$ & $R(n)$ & $r_{n}$ & $n$ & $\overline{v}_{0}(n)$ & $R(n)$ & $r_{n}$ \\
                   \hline
                   1 & 1.8 & 0.41e-2 & 189.9 & 6 & 2.0 & 0.34e-2 & 39.1\\
                   \hline
                   2 & 2.0 & 0.34e-2 & 125.1 & 7 & 2.0 & 0.34e-2 & 33.0\\
                   \hline
                   3 & 2.0 & 0.34e-2 & 76.5 & 8 & 2.0 & 0.34e-2 & 29.1\\
                   \hline
                   4 & 2.0 & 0.34e-2 & 59.6 & 9 & 2.0 & 0.34e-2 & 25.6\\
                   \hline
                   5 & 2.0 & 0.34e-2 & 46.3 & 10 & 2.0 & 0.34e-2 & 23.2\\
                   \hline
\end{tabular}
\end{table}

\begin{figure}[htbp]
\begin{minipage}[h]{1\linewidth}
\begin{minipage}[h]{0.48\linewidth}
\center{\rotatebox{-0}{\includegraphics[
width=1.0\linewidth]{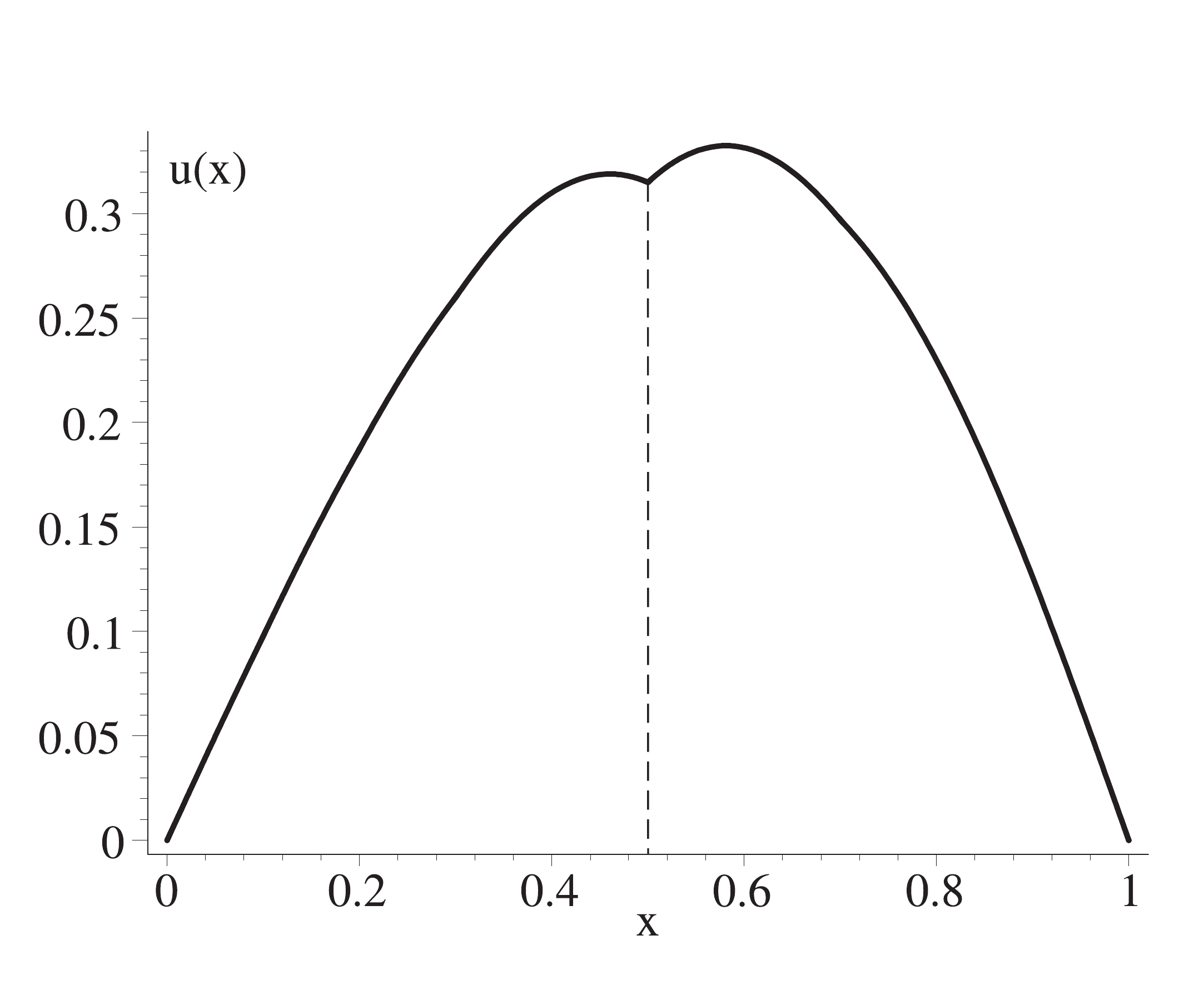}} \\ a)}
\end{minipage}
\hfill\label{image1}
\begin{minipage}[h]{0.48\linewidth}
\center{\rotatebox{-0}{\includegraphics[
width=1.0\linewidth]{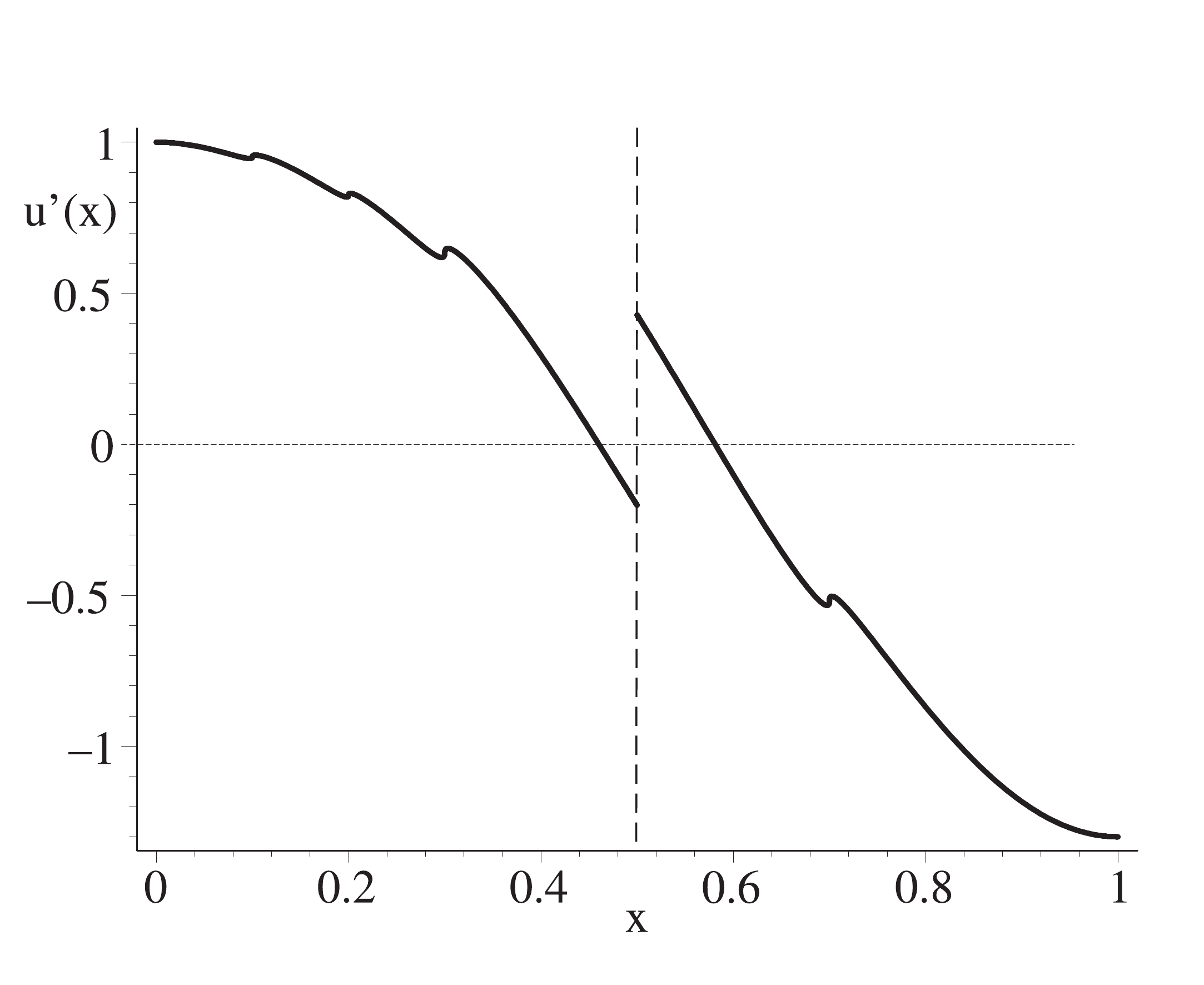}} \\ b)}
\end{minipage}
\caption{Example 1. FD-method. The graphs of
 $\stackrel{10}{u_{1}}\!\!(x)$ (figure a)) and $\frac{d}{dx}\stackrel{10}{u_{1}}\!\!(x)$ (figure b)).}\label{pic_1}
\end{minipage}
\end{figure}

\begin{figure}[htbp]\label{pic_2}
\begin{minipage}[h]{1\linewidth}
\begin{minipage}[h]{0.48\linewidth}
\center{\rotatebox{-0}{\includegraphics[
width=1.0\linewidth]{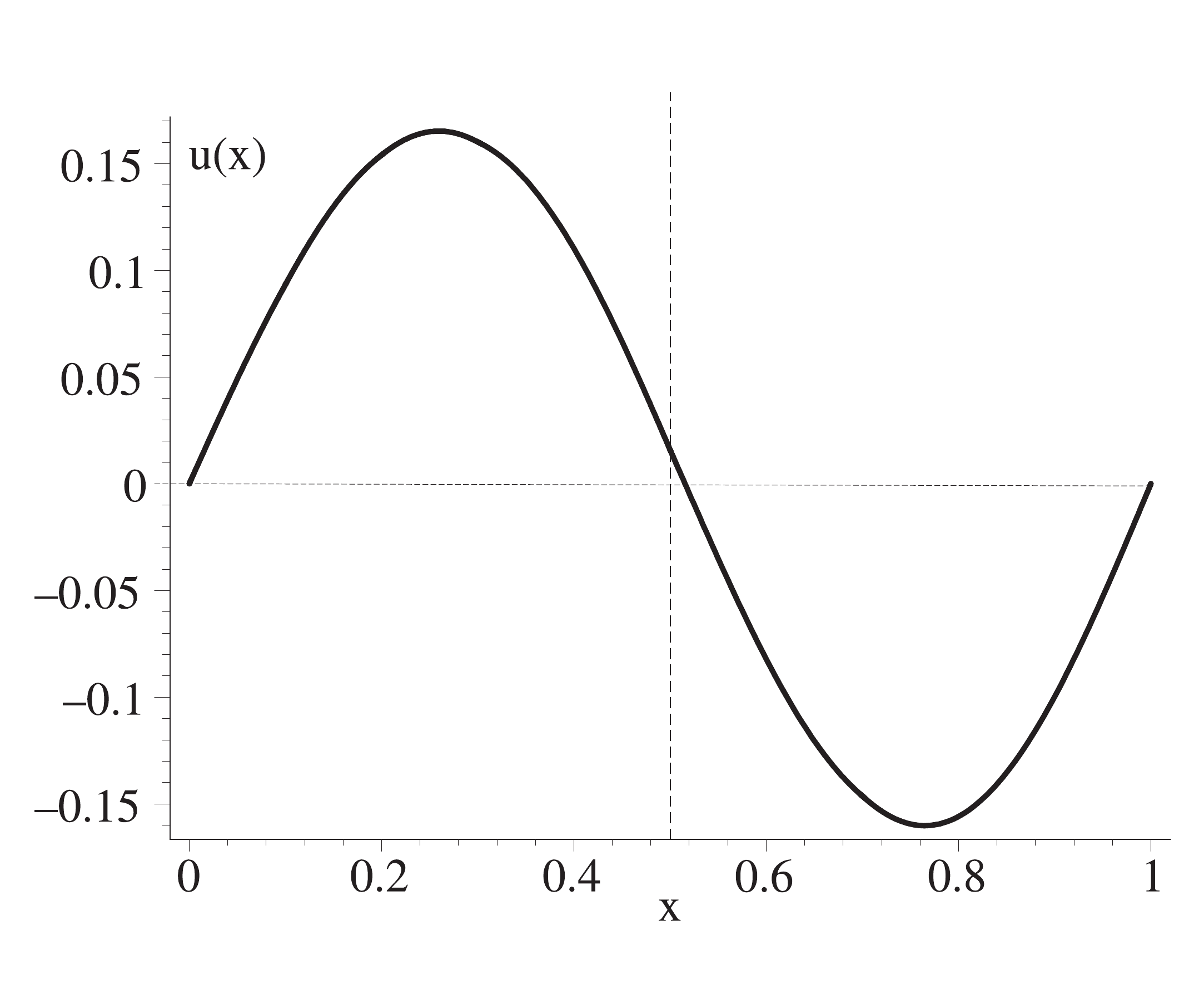}} \\ a)}
\end{minipage}
\hfill\label{image1}
\begin{minipage}[h]{0.48\linewidth}
\center{\rotatebox{-0}{\includegraphics[
width=1.0\linewidth]{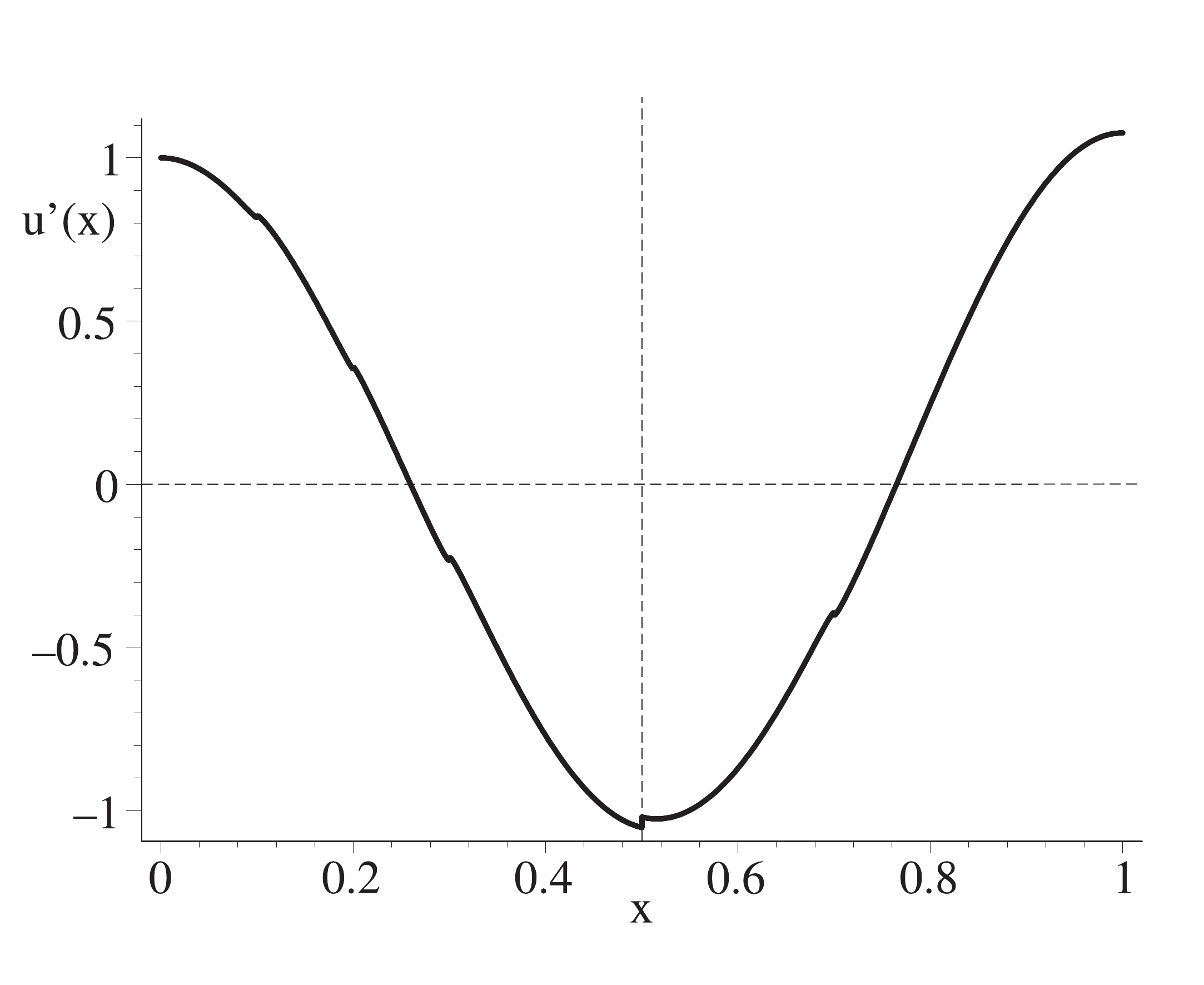}} \\ b)}
\end{minipage}
\caption{Example 1. FD-method. The graphs of
 $\stackrel{10}{u_{2}}\!\!(x)$ (figure a)) and $\frac{d}{dx}\stackrel{10}{u_{2}}\!\!(x)$ (figure b)).}\label{pic_2}
\end{minipage}
\end{figure}

\begin{figure}[htbp]\label{pic_3}
\begin{minipage}[h]{1\linewidth}
\begin{minipage}[h]{0.48\linewidth}
\center{\rotatebox{-0}{\includegraphics[
width=1.0\linewidth]{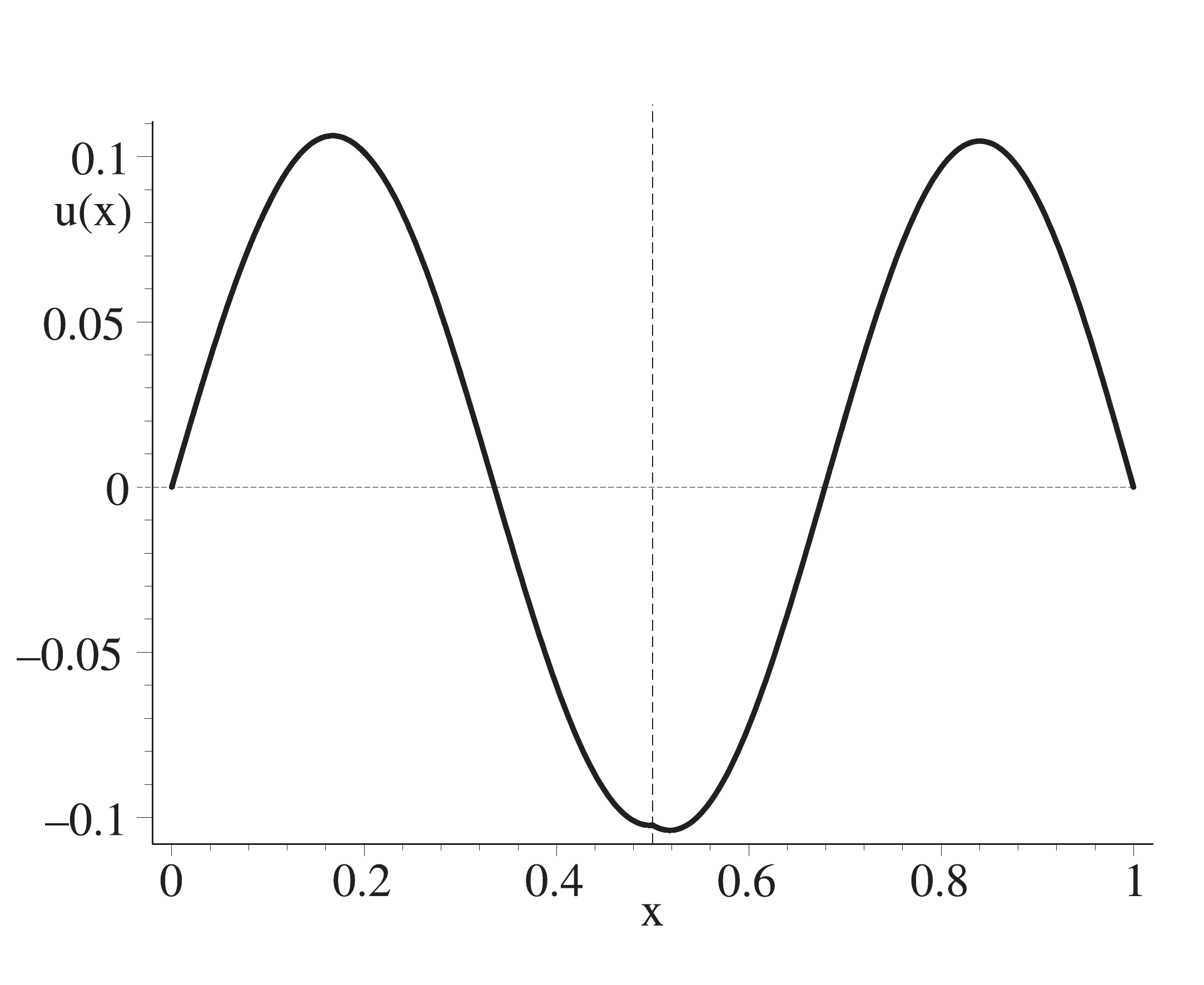}} \\ a)}
\end{minipage}
\hfill\label{image1}
\begin{minipage}[h]{0.48\linewidth}
\center{\rotatebox{-0}{\includegraphics[
width=1.0\linewidth]{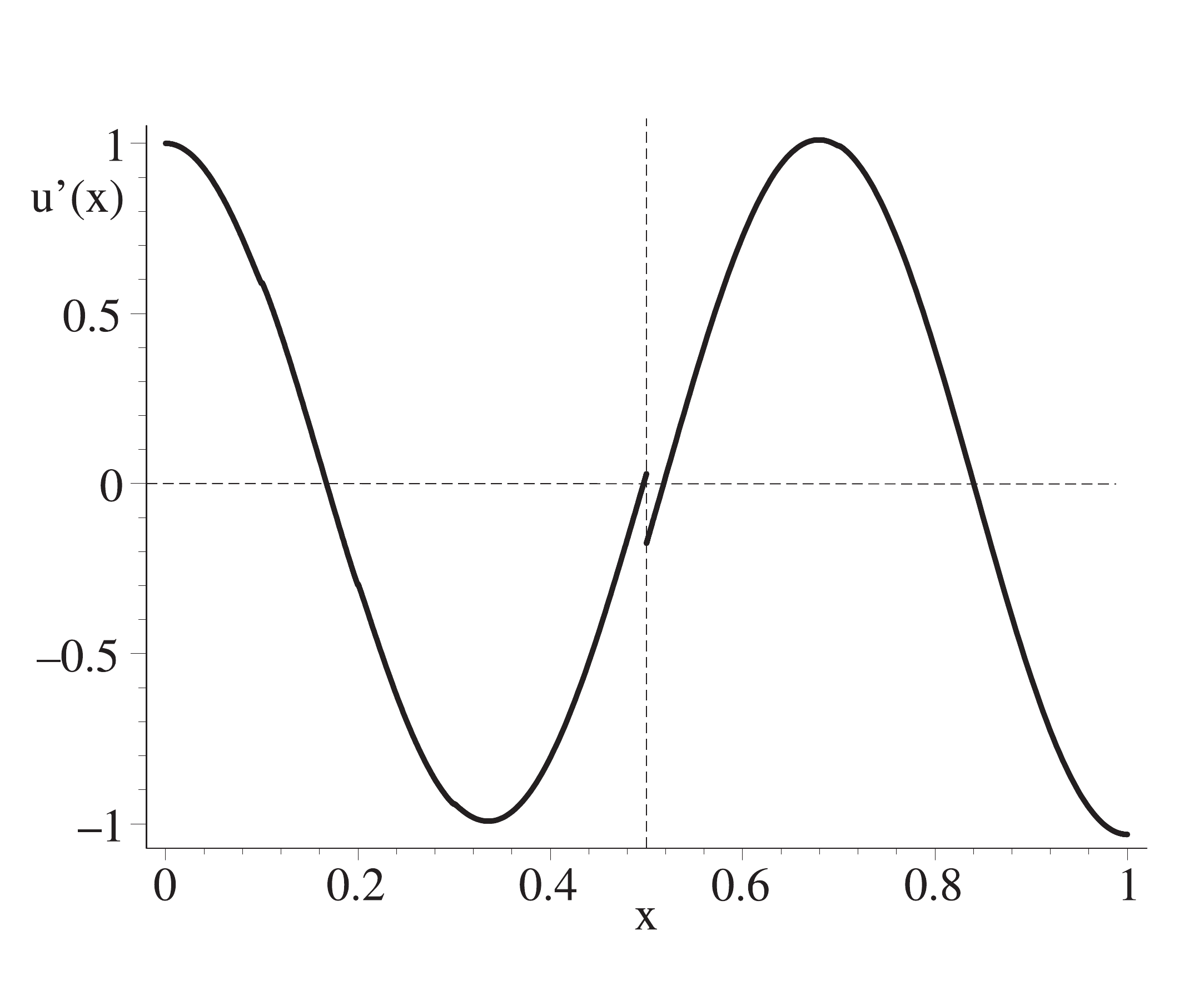}} \\ b)}
\end{minipage}
\caption{Example 1. FD-method. The graphs of
 $\stackrel{10}{u_{3}}\!\!(x)$ (figure a)) and $\frac{d}{dx}\stackrel{10}{u_{3}}\!\!(x)$ (figure b))}\label{pic_3}
\end{minipage}
\end{figure}

\begin{figure}[htbp]
\begin{minipage}[h]{1\linewidth}
\begin{minipage}[h]{0.48\linewidth}
\center{\rotatebox{-0}{\includegraphics[
width=1.0\linewidth]{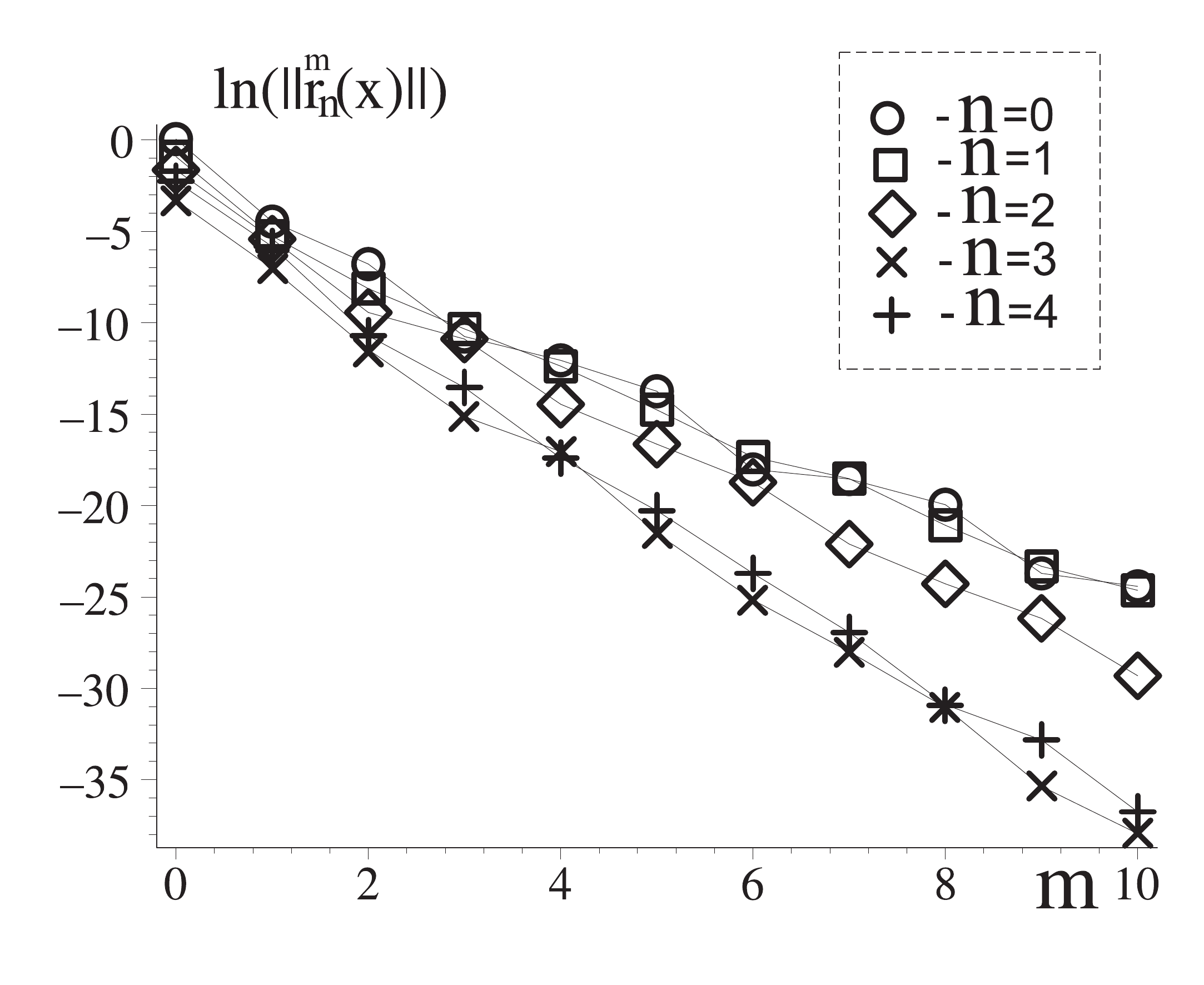}} \\ a)}
\end{minipage}
\hfill\label{image1}
\begin{minipage}[h]{0.48\linewidth}
\center{\rotatebox{-0}{\includegraphics[
width=1.0\linewidth]{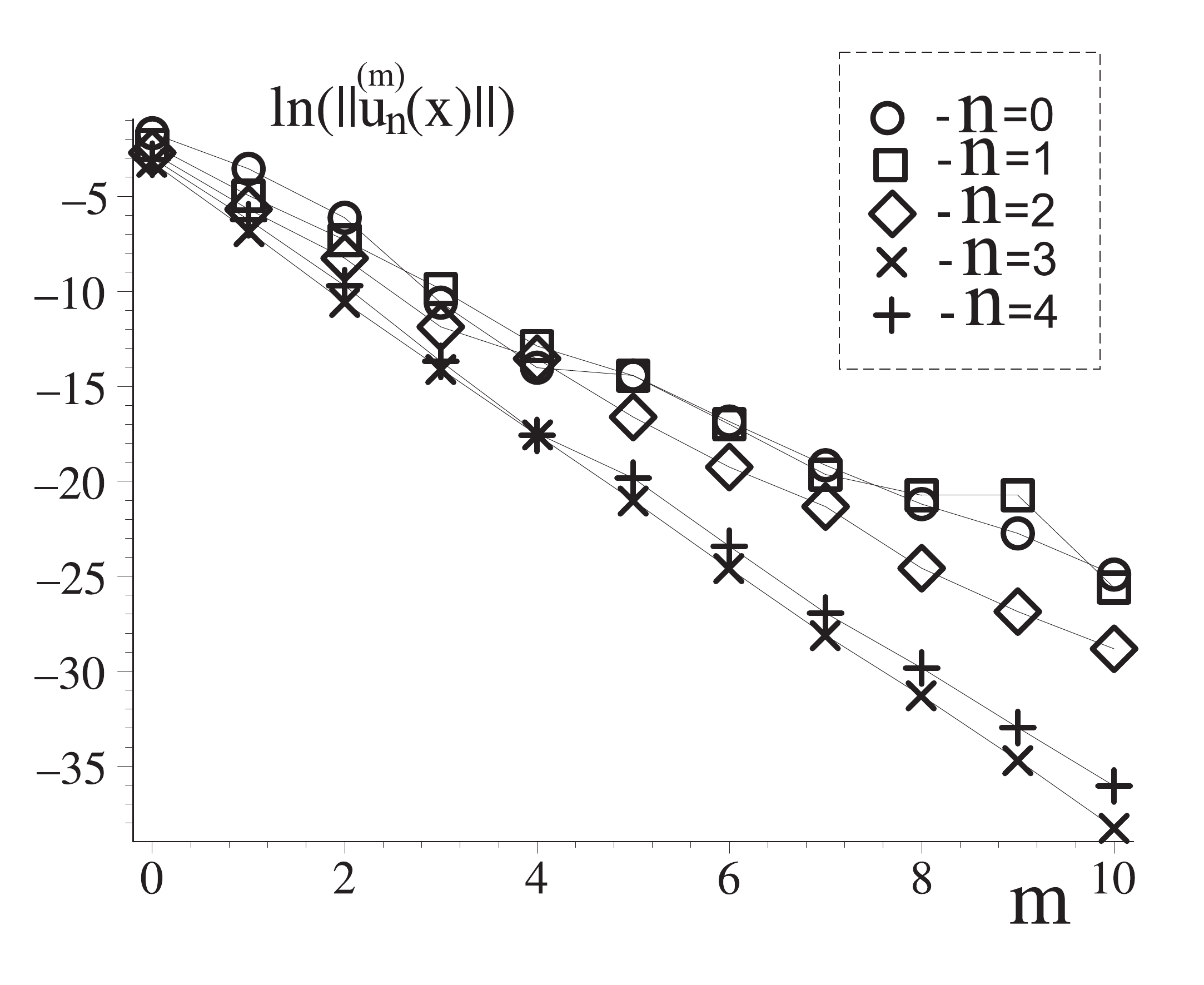}} \\ b)}
\end{minipage}
\caption{Example 1. The graphs of the functions $y_{r, n}(m)=\ln\Bigl(\stackrel{m}{r_{n}}\Bigr)$ (figure a)) and $y_{u, n}(m)=\ln\Bigl(\bigl\|\stackrel{(m)}{u_{n}}\!\!\!(x)\bigr\|\Bigr)$ (figure b)) for $n=0,1,\ldots, 4.$  }\label{pic_6}
\end{minipage}
\end{figure}

\begin{figure}[htbp]
\begin{minipage}[h]{0.5\linewidth}
\center{\rotatebox{-0}{\includegraphics[
width=1.0\linewidth]{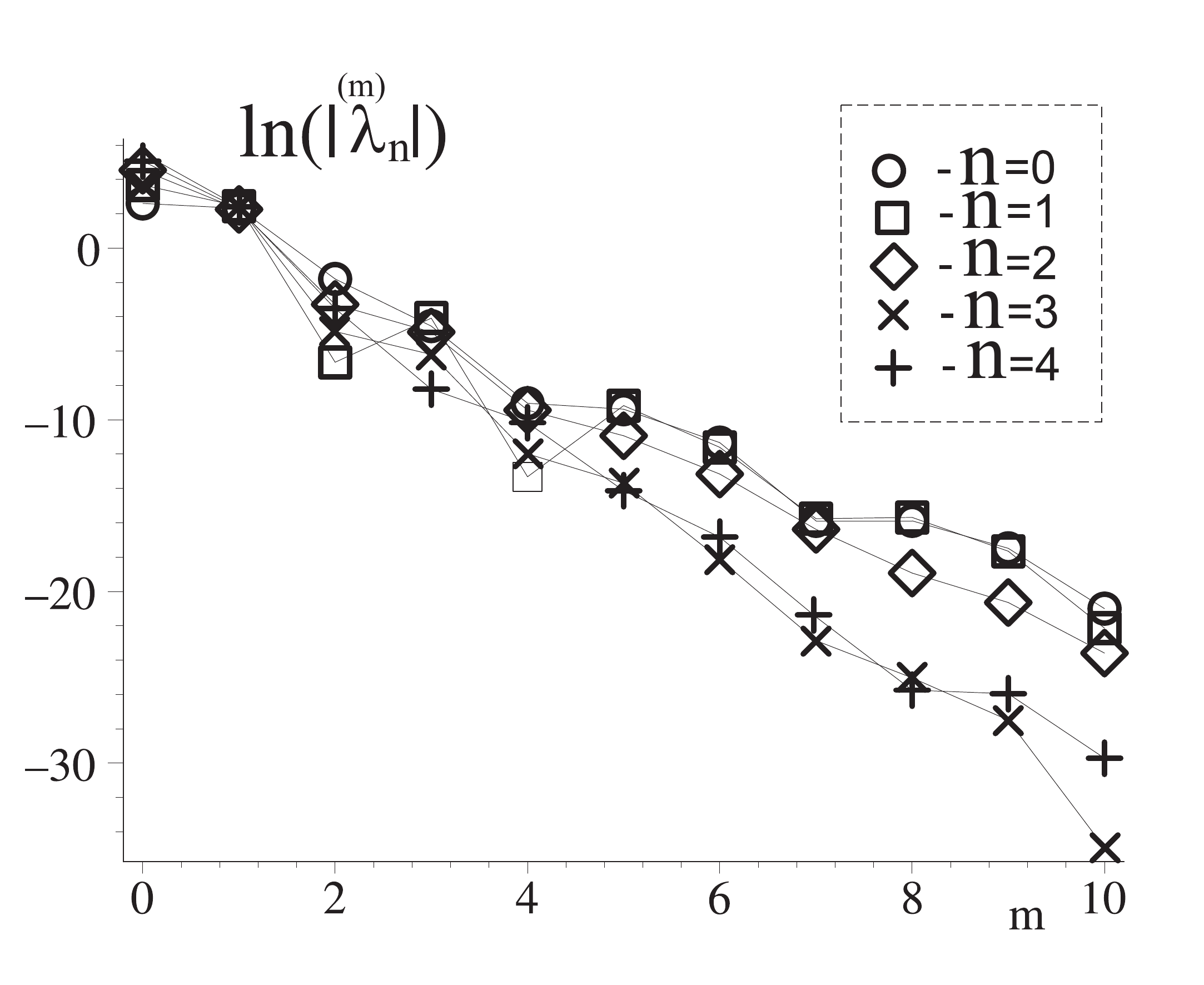}} \\ a)}
\hfill\label{image1}
\caption{Example 1. The graphs of the function $y_{\lambda, n}(m)=\ln\Bigl(\bigl|\stackrel{(m)}{\lambda_{n}}\bigr|\Bigr)$ for $n=0,1,\ldots, 4.$ }\label{pic_7}
\end{minipage}
\end{figure}

\newpage
\section{Conclusions}\label{s_7}
Summarizing the theoretical and practical results presented in the paper we can conclude that the Sturm-Liouville problems with potential including $\delta$-function can be successfully treated with the FD-approach. The authors of the paper are unaware of the software packages which can solve the problems of such type. However, the presence of $\delta$-function in the potential do not introduce significant changes to the FD-method's algorithm in comparison with that for the classic Stourm-Liouville problems. The algorithm of the FD-method can be easily modified for the case of potentials with a finite number of integrable singularities, as it was described and discussed in sections \ref{s_5} and \ref{s_6}. The numerical example presented in section \ref{s_6} confirms the predictions of Theorems \ref{Theorem_linear} and \ref{Theorem_nonlin} about the exponential nature of the FD-method's convergence.

However, as it was mentioned in section \ref{s_6}, the convergence conditions stated in theorems \ref{Theorem_linear}, \ref{Theorem_nonlin} are essentially overestimated and the search for the more subtle conditions is still a pressing issue.

\small
\bibliography{references_stat}

\end{document}